\newtheorem{thm}{Theorem}[section]
\newtheorem{lem}[thm]{Lemma}
\newtheorem{cor}[thm]{Corollary}
\newtheorem{definition}[thm]{Definition}
\newtheorem{example}[thm]{Example}
\newtheorem{Remark}[thm]{Remark}
\newtheorem*{Satz*}{Satz}
\newtheorem{game}{Game}
\newtheorem{Lemma}[thm]{Lemma}
\newcommand{\mathset}[1]{{\left\{#1\right\}}}
\newcommand{\absolute}[1]{\left\lvert#1\right\rvert}
\newcommand{\norm}[1]{\left\|#1\right\|}
\DeclareMathOperator{\Spec}{Spec}
\title{Hearing Shapes via $p$-Adic Laplacians}
\author{Patrick Erik Bradley and \'Angel Mor\'an Ledezma}
\date{\today}
\begin{document}

\maketitle

\begin{abstract}
For a finite graph, a spectral curve is constructed as the zero set of a two-variate polynomial with integer coefficients coming from $p$-adic diffusion on the graph. It is shown that certain spectral curves can distinguish non-isomorphic pairs of isospectral graphs, and can even reconstruct the graph. This allows the graph reconstruction from the spectrum of the associated $p$-adic Laplacian operator. As an application to $p$-adic geometry, it is shown that the reduction graph of a Mumford curve and the product reduction graph of a $p$-adic analytic torus can be recovered from the spectrum of such operators. 
\end{abstract}

%%%%%%%%%%%%%%%%%%%%%%%%%5
\section{Introduction}

The aim of spectral geometry is to describe the relationship between the geometry of certain objects like surfaces, or more general Riemannian manifolds, and the spectra of differential operators, like Laplacians, defined on them. In other words, as stated by M. Kac in \cite{Kac1966}, "Can one hear the shape of a drum?" Ideally, one would like to be able to recover the  geometric object, up to isometry, from the spectra of one or several naturally defined operators. Many counter-examples for Riemanian manifolds have appeared showing that isospectral but non-isometric manifolds exist, which gives a negative answer to the question. For example for the drum problem:
\begin{equation*}
    \begin{cases}
      \nabla u=-\lambda u\\
      u|_{\partial D}=0
    \end{cases}\,
\end{equation*}
Carolyn Gordon, David Webb, and Scott Wolpert in 1992, showed the existence of a pair of drums with different shapes but which are isospectral \cite{GWW1992}. On the other hand, information about the geometry of the object can be extracted from the spectrum. This kind of problems is known as ``inverse problems''. Many famous results towards this direction have been stablished, for example the famous Weyl asymptotic law \cite{Weyl1911}. These problems extend to objects other than Riemannian manifolds, like graphs, where for the adjacency matrix and the Laplacian, non-isomorphic isospectral graphs have been found. 
This problem has been intensively studied, cf.\ e.g.\ \cite{OB2012, MM2016, DH2003}. Recovering the structure of a graph from the spectrum of an operator may lead to an invariant to describe the topology of the graph. This could lead to new applications, like recovering the structure of a graph from a diffusion process, which has many potential applications, e.g.\ for topological access methods for spatial data \cite{JB2022}, to name only one. 
 
\smallskip
Prime numbers play a fundamental role in many mathematical theories and applications to sciences. From the realm of arithmetic as the fundamental blocks or "atoms" of integers, to applications in physics, from quantum physics  to the theory of complex disordered systems and geophysics, information processing, biology, and cognitive science, see \cite{OXOP2017} and the references therein. One powerful framework for the application of number theory in sciences is the so called $p$-adic analysis or more general ultrametric analysis \cite{VVZ1994,XKZ2018}.  An important example is given in the theory of disordered systems (spin glasses) where the $p$-adic structure is encoded in a Parisi matrix which arises by the intrinsic hierarchical structure inside the spin glasses \cite{PhysRevLett.52.1156}. This lead in the middle of the 80s to the idea of using ultrametric spaces to describe the state of complex systems. A central idea in physics of complex systems (like proteins) states that the dynamics on such systems is generated by a random walk (diffusion equation) in the corresponding energy landscape. By using interbasin kinetics methods, an energy landscape is approximated by an ultrametric space  and a function on this space describing the distribution of the activation barriers, see e.g.\ \cite{Kozyrev2011MethodsAA} and the references therein. Most of the applications towards this direction recquire  well-defined and natural pseuddifferential operators constructed on ultrametric  structures such as Non-Archimedean fields, where the Taibleson-Vladimirov operator plays a fundamental role for diffusion on the field $p$-adic numbers \cite{VVZ1994}.  Differential operators and spectral geometry on Riemannian manifolds have been extensively studied, nevertheless there is no comparable theory of Non-Archimedean spectral geometry of pseudodifferential operators over $p$-adic structures. Many other operators have been developed, some of them with the aim of applications, and others as generalisations to more general structures. For the former we have many classes of $p$-adic operators from the work of W.\ Zúñiga, Kozyrev, Khrennikov, where the relation of graph theory and $p$-adic integral and pseudodifferential operators is explicitly stated, see \cite{Zuniga2020,XKZ2018,SVKozyrev2003pAdicPO}, and the reference there in. 

For the latter one of the authors initiated the study of heat equations and integral operators on the Non-Archimedean kin of Riemannian surfaces i.e.\ Mumford Curves \cite{HeatMumf}. All those developments in the theory of pseudodifferential equations over Non-Archimedean spaces clearly deal (indirectly) with one of the main problems in spectral geometry, that is, direct problems in which a description of the eigenvalues is needed. 

\smallskip
In this article we initiate the study of inverse problems of spectral geometry in the Non-Archimedean framework. Moreover, a new invariant for an arbitrary combinatorial simple graph is introduced, showing that the spectra of certain $p$-adic operators defined on the graph lead to a complete characterisation of its isomorphisim class.  The question "Can you hear the shape of a graph?" has already been answered in different contexts. In \cite{Gutkin2001CanOH}, the question was posed in the context of quantum graphs, and was answered in the affirmative, that is, they showed  that the spectrum of the Schrödinger operator on a finite, metric graph determines uniquely the connectivity matrix and the bond lengths under certain conditions. In \cite{Lawn2021}, a new spectral invariant in quantum graphs has been introduced.  In \cite{Harrison2022CanOH}, it was proved that the spectral determinant of the Laplace operator on a ﬁnite connected metric graph determines the number of spanning trees under certain conditions. Understanding how the spectra of certain operators in general graphs  determine the geometry of a graph is an important task for applications like graph comparison in graph analytics. For example in \cite{Tsitsulin2018NetLSDHT},  the Network Laplacian Spectral Descriptor, a graph representation method that allows for straightforward comparisons of large graphs, is proposed. Moreover, our results are applied to $p$-adic structures like Mumford curves and $p$-adic analytic tori. Hence these results initiate the study of inverse problems in spectral geometry in the Non-Archimedean framework. \newline

Given a graph $G$ and a matrix $\Delta\in \mathbb{N}^{|G|\times |G|}$, we study a generalisation of a graph Laplacian $\Lambda_{G}^{\Delta}$ defined in $L^2(G\times K)$, where $K$ is a non-archimedean local field. The space $L^2(G\times K)$ can be decomposed as a direct sum of finite dimensional spaces 
%$G\mathcal{K}$ 
of dimension $|G|$, this leads of the following representation of $\Lambda_{G}^{\Delta}$ ,
$$\Lambda_{G}^{\Delta}=\bigoplus_{G\mathcal{K}} L(G_r^{\Delta}),$$
where the matrices $L(G_r^{\Delta})$ are the Laplacian matrix of a weighted version of the graph, and for $r=1$, we have that $G_1=G$. Therefore, this operator can be understood as a direct sum of scaled replica of the original graph.  The spectrum of each copy belongs to a 
common plane algebraic curve $V(P_{G}^{\Delta})$ called the spectral curve of the graph.  For a suitable choice of $\Delta$ we prove that $P_G^{\Delta}$ is an invariant of the graph $G$.  This leads to a reconstruction theorem which enable us to reconstruct the graph through the spectra of the operator $\Lambda_{G}^{\Delta}$ (see Theorem $4.7$ and Corollary $4.9$). Finally using these results we are able to reconstruct the  reduction graph of a Mumford curve and the product graph coming from the reduction of a $p$-adic analytic torus using the spectrum of a $p$-adic Laplacian.

%%%%%%%%%%%%%%%%%%%%%%%%%%%%%%%
\section{Notation and Some Results from $p$-Adic Analysis}

In this section we review some results from $p$-adic analysis, for a complete exposition of the subject and proofs the reader may consult \cite{AKochubei}. \newline

Let $K$ be a Non-Archimedean local field. Let $|\cdot|_K$ denote the absolute value of the field $K$. Denote 
the local ring of $K$ 
by $\mathcal{O}_K=\{x\in K : |x|_K\leq 1\}$ and 
its maximal ideal by $\mathfrak{m}_K=\{x\in K:|x|_K<1\}$. Let $\chi$ be a fixed non-constant complex-valued additive character on $K$. We denote by $dx$ the Haar measure on the additive group of $K$, normalised such that the measure of $\mathcal{O}_K$ is equal to $1$. The Fourier transform of an absolute integrable complex-valued function $f\in L^1(K)$ 
will be written as 
\[
\mathscr{F}(f)(\xi)= \int_{K} \chi(\xi x)f(x)dx, \  \xi\in K. 
\]
If $\mathscr{F}(f)=\hat{f} \in L^1(K),$ we get the inversion formula
\[
f(x)=\int_{K} \chi(-x\xi)\hat{f}(\xi)d\xi. 
\]
Since the mapping $\mathscr{F}:L^1(K)\cap L^2(K)\rightarrow L^2(K)$ is an isometry, this mapping has an extension to an $L^2-$isometry from $L^2(K)$ into $L^2(K)$, where the inverse Fourier transform will be denoted as $\mathscr{F}^{-1}$.  
\newline

 Now we introduce the Vladimirov-Taibleson operator.  Let $\mathcal{D} \subset L^2(K)$  be its domain given by the set of those $f\in L^2(K)$, for which $|\xi|^{\alpha} \hat{u}(\xi)\in L^2(K)$. The Vladimirov operator $(\Delta^{\alpha},\mathcal{D})$,  $\alpha>0$, for $f\in \mathcal{D}$ is defined by 
\[
\Delta^{\alpha} f(x)= \mathscr{F}^{-1}_{\xi\mapsto x}(|\xi|_K^{\alpha} \mathscr{F}_{x\mapsto \xi} (f)(\xi))(x), \ x\in K. 
\]
The operator $\Delta^{\alpha}$ is an unbonded operator in $L^2(K)$, and since it is unitarily equivalent to the operator of multiplication by $|\xi|_K^{\alpha}$, it is self-adjoint, its spectrum consists of the eigenvalues $\lambda_{r}=q^{\alpha r}$, where $r\in \mathbb{Z}$ and $q$ is the cardinality of the residue field $\mathcal{O}_K /\mathfrak{m}_K$. Moreover we have the following result
\begin{thm}[Kozyrev]\label{KozyrevWavelet}
    There exist a complete orthonormal system of eigenfunctions of the operator $\Delta^{\alpha}$ of the form $\psi_{r,n}(x)\in L^2(K)$ , where $r\in \mathbb{Z}$ and $n\in \mathbb{N}$ such that 
\[\Delta^{\alpha} \psi_{r,n}(x)=q^{\alpha(1-r)} \psi_{r,n}.\]
\end{thm}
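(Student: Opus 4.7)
The plan is to construct the eigenfunctions explicitly as the Kozyrev wavelets, verify the eigenvalue equation by Fourier transform, and then prove orthonormality and completeness. Fix a uniformizer $\pi \in \mathfrak{m}_K$ with $|\pi|_K = q^{-1}$ and choose a set of coset representatives $J \subset \mathcal{O}_K^{\times}/(1+\mathfrak{m}_K)$, together with coset representatives $\mathcal{N}$ for $K/\mathcal{O}_K$. Define the wavelets
\[
\psi_{r,n}(x) = q^{-r/2}\,\chi\bigl(\pi^{-1} j\,(\pi^{r} x - n)\bigr)\,\Omega\bigl(|\pi^{r} x - n|_K\bigr),
\]
where $\Omega$ is the characteristic function of $\mathcal{O}_K$, and the index $n$ packages the pair $(j, \tilde n) \in J \times \mathcal{N}$. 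These are translations and dilations of the mother wavelet $\psi(x) = \chi(\pi^{-1} j x)\Omega(|x|_K)$.

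The first substantive step is to compute $\widehat{\psi}_{r,n}$. A direct calculation using the normalisation of the Haar measure gives that $\widehat{\psi}_{r,n}(\xi)$ is, up to a phase factor, the characteristic function of a sphere of the form $|\xi|_K = q^{1-r}$ (shifted by a fixed frequency coming from $j$); the key point is that the support is contained in $\{|\xi|_K = q^{1-r}\}$. Since $\Delta^{\alpha}$ is unitarily equivalent to multiplication by $|\xi|_K^{\alpha}$ and the support of $\widehat{\psi}_{r,n}$ lies entirely on the sphere of radius $q^{1-r}$, one obtains
\[
\Delta^{\alpha}\psi_{r,n} = \mathscr{F}^{-1}\bigl(|\xi|_K^{\alpha}\,\widehat{\psi}_{r,n}\bigr) = q^{\alpha(1-r)}\,\psi_{r,n},
\]
which is the desired eigenvalue identity.

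Next I would verify orthonormality. For fixed $r$, different translates $n$ have disjoint supports (they are constant-times-character on disjoint balls of radius $q^{-r}$), so $\langle \psi_{r,n}, \psi_{r,n'}\rangle = 0$ when $n \ne n'$; the chosen prefactor $q^{-r/2}$ makes the norm equal to $1$. For different scales $r \ne r'$, one reduces to an integral of a nontrivial additive character over a ball, which vanishes by the standard $p$-adic character integral $\int_{\mathcal{O}_K}\chi(\xi x)\,dx = \Omega(|\xi|_K)$.

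The main obstacle is completeness. Here I would argue that the closed linear span $V$ of all $\psi_{r,n}$ contains every locally constant function of compact support, which is dense in $L^{2}(K)$. Concretely, it suffices to show that the characteristic function of every ball $B = n + \pi^{r}\mathcal{O}_K$ lies in $V$; this can be done by induction on the radius using the identity $\Omega(x) = q^{-1}\Omega(\pi^{-1}x) + (\text{sum of wavelets at one finer scale})$, expressing the indicator of a larger ball as the average over its $q$ sub-balls plus a finite linear combination of the $\psi_{r,n}$ that detect the differences between those sub-balls. Iterating this telescoping decomposition, and using that the wavelets indexed by $j \in J$ span the orthogonal complement of the constants inside $L^{2}(\mathcal{O}_K / \pi\mathcal{O}_K)$ via a finite Fourier analysis on the residue field, exhausts the space of compactly supported locally constant functions and hence yields density.
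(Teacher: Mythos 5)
The paper does not actually prove this theorem: its ``proof'' is a pointer to \cite{Kozyrev2002} for $K=\mathds{Q}_p$ and to \cite{AKochubei} for general local fields. What you have written is a reconstruction of the standard argument contained in those references, and its skeleton is correct: the explicit wavelets $\psi_{r,n}$, the computation that $\widehat{\psi}_{r,n}$ is supported on the single sphere $\absolute{\xi}_K=q^{1-r}$ (so that multiplication by $\absolute{\xi}_K^{\alpha}$ acts as the scalar $q^{\alpha(1-r)}$), and orthonormality via vanishing of nontrivial character integrals over balls. One hypothesis you use silently should be made explicit: the identity $\int_{\mathcal{O}_K}\chi(\xi x)\,dx=\Omega(\absolute{\xi}_K)$, on which both the support computation and the orthogonality for $r\neq r'$ rest, requires $\chi$ to have conductor exactly $\mathcal{O}_K$ (trivial on $\mathcal{O}_K$, nontrivial on $\pi^{-1}\mathcal{O}_K$); the paper only assumes $\chi$ non-constant, so you must first replace $\chi$ by $\chi(c\,\cdot)$ for suitable $c$. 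Also, two wavelets with the same $(r,\tilde n)$ but different $j$ do \emph{not} have disjoint supports; their orthogonality comes from the character sum over the residue field, not from support considerations.

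The completeness step, which you correctly identify as the main obstacle, is where the写 argument as written breaks. The identity $\Omega(x)=q^{-1}\Omega(\pi^{-1}x)+(\text{wavelets})$ is false: $\Omega(\pi^{-1}x)$ is the indicator of the \emph{smaller} ball $\pi\mathcal{O}_K$, so the two sides have integrals $1$ and $q^{-2}$ respectively, while every wavelet has integral zero. The correct telescoping goes in the opposite direction, expressing a ball in terms of the \emph{larger} ball containing it: $1_{B}=\frac{\mu(B)}{\mu(B')}1_{B'}+(\text{wavelets at the scale of }B')$ for $B\subset B'$. More importantly, no \emph{finite} induction of this kind can terminate, since any finite linear combination of the $\psi_{r,n}$ has zero integral whereas $1_B$ does not; one must iterate over arbitrarily large balls $B_0\subset B_1\subset\cdots$ and observe that the residual term satisfies
\[
\Bigl\Vert\tfrac{\mu(B_0)}{\mu(B_N)}1_{B_N}\Bigr\Vert_{L^2}^2=\frac{\mu(B_0)^2}{\mu(B_N)}\longrightarrow 0,
\]
so that $1_{B_0}$ lies only in the \emph{closed} span. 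With that correction, plus the finite Fourier analysis on the residue field you already invoke to see that the scale-$r$ wavelets span the mean-zero functions constant on sub-balls, the density argument closes and the proof is complete.
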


\begin{proof}
For the case of $\mathds{Q}_p$, cf.\ \cite{Kozyrev2002}.
The case of a general Non-Archimedean local field $K$,
cf.\ \cite{AKochubei}.
\end{proof}

Henceforth this basis of $L^2(K)$ from Theorem \ref{KozyrevWavelet} will be denoted by $\mathcal{K}$. 

%%%%%%%%%%%%%%%%%%%%%%%%%%5
\section{Spectral Curves for Diffusion Pairs}

In this section, we introduce the objects necessary for constructing the spectral curve of a so-called \emph{diffusion pair} which is actually nothing but a weighted graph, where the weights are integer powers of a fixed variable $Y$.
These objects are $p$-adic matrix-valued Laplacian operators reflecting the adjacency structure of a graph.

%%%%%%%%%%%%%%%%%%%%%%%%%
\subsection{$p$-Adic Laplacians for Graphs}\label{sec:Laplacian}

Let $G\subset K/O_K$ be a finite set. Then we have  isomorphisms
\[
L^2(G\times K)
\cong L^2(G)\otimes L^2(K)
\cong \bigoplus\limits_{a\in G}L^2(K_a)
\]
where $K_a$ is a copy of $K$ for each $a\in G$.
We define maps:
\begin{align}\label{Gsquare}
\xymatrix{
\bigoplus\limits_{a\in G}
L^2(K_a)\ar[r]^{H_G}&
\bigoplus\limits_{a\in G}
L^2(K_a) %\ar[d]^{\sum\limits_{a\in G}\tau_{-a,*}}
%\\
%L^2(K)\ar[u]^{(\tau_{a,*})_{a\in G}}\ar[r]^H&
%L^2(K)
}
\end{align}
where we write
\[
L^2(K_a)=\bigoplus\limits_{\psi\in \mathcal{K}}
\mathds{C}\psi_a
\]
using the set $\mathcal{K}$ of Kozyrev wavelets on $K$, and
\[
\psi_a(x)=\psi(x).
\]
The map $H_G$ is given as follows:
\begin{align*}
%\tau_{a,*}&\colon L^2(K)\to L^2(K_a),\;\psi\mapsto\psi_a
%\\
H_G&\colon (u_g)_{g\in G}\mapsto
(f_g)_{g\in G},\;
f_g=\sum\limits_{a\in G}C_{ga}\Delta_{ga} u_a
%\\
%H&\colon u\mapsto\sum\limits_{a,g\in G}
%C_{ag}\Delta_{ag}\chi_{a-g} u
\end{align*}
where $\Delta_{ga}$ is the Vladimirov operator
\[
\Delta_{ga}
\colon
L^2(K_a)\to L^2(K_g),
\psi_a\mapsto\Delta^{\alpha_{ga}}\psi_g
\]
where
\[
\Delta^{\alpha_{ga}}=\mathscr{F}^{-1}\absolute{\cdot}_K^{\alpha_{ga}}\mathscr{F}
\]
behaves like the usual Vladimirov operator, except for being applied to different copies of Kozyrev wavelets indexed by vertices of $G$. In particular, it simply multiplies the  indexed Kozyrev wavelet
$\psi_g$ by an integer power of $q^{\alpha_{ga}}$.

\smallskip
Notice that in the basis  of $L^2(G\times K)$
given by 
\[
G\mathcal{K}
=\mathset{\psi_g\colon g\in G,\;\psi\in\mathcal{K}}
\]
we can represent $H_G$ by the  
$\absolute{G}\times\absolute{G}$-matrix
\[
(C_{ag}\Delta_{ag})
\]
And the matrix $C=(C_{ag})$ can be viewed as an adjacency matrix of a simple graph with vertex set $G$. 

\smallskip
In order to obtain a graph Laplacian matrix, we consider instead of $H_G$ the operator
\[
\Lambda_G^\Delta\colon L^2(K)^{\absolute{G}}\to
L^2(K)^{\absolute{G}}
\]
represented by the matrix
\[
(L_{ab}^\Delta)_{a,b\in G}
\]
with
\[
L_{ab}^\Delta=
\begin{cases}
-C_{ab}\Delta_{ab},&a\neq b
\\
\sum\limits_{g\in G}C_{ag}\Delta_{ag},&a=b
\end{cases}
\]
Here, $\Delta=(\Delta_{ga})$ can be viewed as a a matrix in
$\mathds{N}^{\absolute{G}\times\absolute{G}}$ having entry $\alpha_{ga}$ whenever $ga$ represents an edge of the graph.

\smallskip
Later, we will show that there exist choices of diffusion parameters $\alpha_{ag}\in\mathds{N}$ such that
the spectrum of the operator
$\Lambda_G^\Delta$  determines the isomorphism class of
the combinatorial simple graph $G$.

\begin{definition}
The operator $\Lambda_G^\Delta$ is called the \emph{$p$-adic Laplacian associated with the diffusion pair $(G,\Delta)$}.
\end{definition}

%%%%%%%%%%%%%%%%%%%%%%%%%%
\subsection{The Spectral Curve of a Diffusion Pair}

Let $(G,\Delta)$ be a diffusion pair. Recall that 
$L=L_1$ is the Laplacian of the graph $G$.
We begin with the following observation:
\begin{lem}
It holds true that
\[
\Spec(L)\subset\Spec(\Lambda_G^\Delta)
\]
as an inclusion of multi-sets.
\end{lem}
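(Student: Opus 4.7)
The plan is to diagonalize $\Lambda_G^\Delta$ blockwise along the Kozyrev basis, identify one of the resulting finite blocks with the ordinary graph Laplacian $L$, and then read off the desired multi-set inclusion from the abundance of wavelets at the relevant scale.

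First, I would use the decomposition
\[
L^2(G\times K)\cong\bigoplus_{\psi\in\mathcal{K}}V_\psi,\qquad V_\psi=\bigoplus_{g\in G}\mathds{C}\psi_g,
\]
which refines the direct sum $\bigoplus_{a\in G}L^2(K_a)$ coordinate by coordinate. By Theorem~\ref{KozyrevWavelet}, each Vladimirov operator $\Delta^{\alpha_{ga}}$ is diagonal in the Kozyrev basis, so every $V_\psi$ is stable under each building block $\Delta_{ga}$, and consequently under the whole matrix operator $\Lambda_G^\Delta$. Thus $\Lambda_G^\Delta$ is block-diagonal with one $|G|\times|G|$ block per Kozyrev wavelet, and
\[
\Spec(\Lambda_G^\Delta)=\bigsqcup_{\psi\in\mathcal{K}}\Spec\bigl(\Lambda_G^\Delta|_{V_\psi}\bigr)
\]
as a multi-set.

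Next, I would fix a wavelet $\psi=\psi_{r,n}$ and compute the block $\Lambda_G^\Delta|_{V_\psi}$ in the ordered basis $(\psi_g)_{g\in G}$. By Theorem~\ref{KozyrevWavelet} we have $\Delta^{\alpha_{ab}}\psi=q^{\alpha_{ab}(1-r)}\psi$, so the off-diagonal entry contributes
\[
-C_{ab}\,\Delta_{ab}\psi_b=-C_{ab}\,q^{\alpha_{ab}(1-r)}\psi_a,
\]
while the diagonal entry contributes $\sum_{g}C_{ag}q^{\alpha_{ag}(1-r)}\psi_a$. In other words, the block is the weighted Laplacian matrix $L(G_r^\Delta)$ of the graph $G$ with edge weights $q^{\alpha_{ab}(1-r)}$.

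Finally, I would specialize to $r=1$: all exponents become $0$, so every weight equals $1$, every $C_{ab}\in\{0,1\}$, and the block $L(G_1^\Delta)$ is literally the combinatorial Laplacian $L$. Since the Kozyrev basis contains infinitely many wavelets of scale $r=1$ (one for each admissible index $n$), the entire multi-set $\Spec(L)$ is contributed (in fact, infinitely many times) to $\Spec(\Lambda_G^\Delta)$, yielding $\Spec(L)\subset\Spec(\Lambda_G^\Delta)$ as multi-sets.

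The only subtle point is ensuring that the informal ``matrix of operators'' $(L_{ab}^\Delta)$ really does restrict, via the Kozyrev block decomposition, to a genuine scalar matrix acting on $V_\psi$; this hinges on the fact that $\Delta_{ga}$ sends $\psi_a$ to a scalar multiple of $\psi_g$ (not to some other wavelet), which is exactly the content of Theorem~\ref{KozyrevWavelet} applied to each copy $K_a$. Once that bookkeeping is in place, the identification $L(G_1^\Delta)=L$ and the multi-set conclusion are immediate.
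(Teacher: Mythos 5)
Your proposal is correct and follows essentially the same route as the paper: block-diagonalize $\Lambda_G^\Delta$ in the Kozyrev basis $G\mathcal{K}$, observe that the entries of each $\absolute{G}\times\absolute{G}$ block are Vladimirov eigenvalues $q^{\alpha_{ab}(1-r)}$, and identify the $r=1$ block with the combinatorial Laplacian $L$. You merely spell out the invariance of each $V_\psi$ and the scalar computation in more detail than the paper does.
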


\begin{proof}
Observe first that in the basis $G\mathcal{K}$, the operator $\Lambda_G^\Delta$ is represented by an $\mathds{N}\times\mathds{N}$-matrix having a block-diagonal structure with blocks of size $\absolute{G}\times\absolute{G}$ after a suitable linear ordering of the basis.
Now, the non-zero entries of each block 
away from the diagonal
consist of Vladimirov eigenvalues. By Theorem \ref{KozyrevWavelet}, they are of the form
\[
q^{\alpha_{ag}(1-r)}
\]
with $r\in \mathds{Z}$. For $r=1$ we identify the Laplacian matrix $L$ as one of the blocks. Hence, the eigenvalues of $L$ are contained in the spectrum of $\Lambda_G^\Delta$. 
\end{proof}

Observe further that the block-diagonal structure found in the proof of the above Lemma is in fact a replication of Laplacian matrices for the same combinatorial graph structure on $G$, except that now the edge lengths 
are powers of $p$ of the form $p^{\alpha_{ab}(1-r)}$ for fixed $r\in \mathds{Z}$.
This means that there is a family  of graphs $G_r^\Delta$ parametrised by $r\in\mathds{Z}$ having the same combinatorial Laplacian matrix $L$. And in order to find all eigenvalues of $\Lambda_G^\Delta$, it is necessary and sufficent to find the Laplacian eigenvalues for each graph in the family
$G_r^\Delta$ with $r\in\mathds{Z}$.

\smallskip
We will now examine the characteristic polynomial of each graph Laplacian $L_r^\Delta$ associated with graph $G_r^\Delta$ from the family. Notice that
\[
L_1^\Delta=L_1=L,\quad G_1^\Delta=G_1=G,
\]
are independent of the diffusion parameters symbolised by $\Delta$. 
We also assume that the parameters $\alpha_{ab}$ are all pairwise different positive natural numbers.
The characteristic polynomial of $G_r^\Delta$
is
\[
P_r^\Delta(X)\in\mathds{Q}[X]
\]
and its degree is $\absolute{G}$. 
Again, we have
\[
P_1(X)=P_1^\Delta(X)
\]
is independent of $\Delta$, and coincides with the characteristic polynomial of the graph Laplacian $L$.

\smallskip
The coefficients of $P_r^\Delta$ are given by the Leibniz formula for the determinant as polynomials  with integer coefficients  in another variable $Y$ evaluated in $p^{1-r}$. 
Hence, we obtain a polynomial
\[
P_G^\Delta(X,Y)\in\mathds{Z}[X,Y]
\]
whose zero set in $\overline{\mathds{Q}}^2$ contains the spectrum of $\Lambda_G^\Delta$ as the first coordinate of some of its points. Here, we mean by $\overline{\mathds{Q}}$ the algebraic closure of $\mathds{Q}$.

\begin{definition}
The completion of the plane algebraic  curve
$V(P_G^\Delta)$ 
to a projective algebraic curve is called the \emph{spectral curve} of the pair $(G,\Delta)$. 
%It depends on the diffusion constants $\alpha_{ab}\in\mathds{N}$ symbolised by $\Delta$.
\end{definition}

%%%%%%%%%%%%%%%%%%%%%%%%%%%%%%
\subsection{Recovering Spectral Curves}

Assume that we are given the $\Spec\Lambda_G^\Delta$ as a multi-set, where $\Lambda_G^\Delta$ is the $p$-adic Laplacian associated with diffusion pair $(G,\Delta)$. Assume also that the task is to recover the graph $G$ from that spectrum.
One way would be to try to recover the spectral polynomial
$P(X,Y)=P_{G}^\Delta(X,Y)$ and use the Reconstruction Theorem (Theorem \ref{reconstruct}) proved below.

\smallskip
In this situation, an algorithm which terminates in finite time cannot be expected, because each individual eigenvalue has to be associated with one of the graphs $G_r$ ($r\in\mathds{Z}$) in the family induced by the spectral pair.
But from a purely existential standpoint, we can say that there exists a classification of eigenvalues (including their multiplicities) such that each class is $\Spec(L_r)$ with $r\in\mathds{Z}$. Once this classification is made, then
each coefficient
\[
a_i(p^{1-r}),\quad i=1,\dots,n
\]
of the polynomial
\[
P(X,Y)=\sum\limits_{i=1}^na_i(Y)X^i
\]
with
$r\in\mathds{Z}$ can be calculated in each class of eigenvalues. All that is then needed, is for each $r\in\mathds{Z}$ the value of
\[
P(X,p^{1-r})
\]
in finitely many places $x_s$. Then interpolation yields the coefficients of $P(X,Y)$.

\begin{definition}
A set of pairs $(x_s,p^{1-r})$ with $s,r\in R\subset\mathds{N}$
is called a \emph{recovery datum}, if $R$ is a finite set and
$P(X,Y)$ can be interpolated after evaluating the polynomial in that set of pairs.
\end{definition}

\begin{thm}\label{existenceRecoveryDatum}
Let $(G,\Delta)$ be a diffusion pair.
Given $\Spec(L_r)$ as distinguished multi-sets for sufficiently but finitely many $r\in\mathds{Z}$,
it is possible to obtain recovery data for the spectral polynomial $P_{(G,\Delta})$ with a terminating algorithm.
\end{thm}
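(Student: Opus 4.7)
The plan is to give an explicit, finite algorithm that, from the spectra of the slices $L_r^\Delta$ for a suitably chosen finite set of indices $r$, produces the pairs demanded by the definition of a recovery datum. The heart of the argument is an a priori bound on the bi-degree of $P_G^\Delta(X,Y)$ computable from $(G,\Delta)$ alone. Indeed, the matrix-valued expression $L^\Delta(Y)$ whose characteristic polynomial is $P_G^\Delta$ has entries that are integer linear combinations of monomials $Y^{\alpha_{ab}}$, with the exponents $\alpha_{ab}$ prescribed by $\Delta$. Hence the Leibniz expansion of $\det(XI - L^\Delta(Y))$ yields
\[
\deg_X P_G^\Delta = n := |G|, \qquad \deg_Y P_G^\Delta \le d := \sum_{a\in G}\max_{b\in G}\alpha_{ab},
\]
and both $n$ and $d$ are readable off the data.

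Set $N := \max(n,d) + 1$ and assume the hypothesis furnishes $\Spec(L_r)$ for $r = 1, 2, \dots, N$ (or any $N$ distinct integers). Because $p > 1$, the $N$ scalars $Y_j := p^{1-j}$ are pairwise distinct. For each such $j$ I form the univariate characteristic polynomial
\[
P_j(X) := \prod_{\lambda\in\Spec(L_j)}(X - \lambda) = P_G^\Delta(X, Y_j),
\]
respecting multiplicities. This step is finite because each spectrum is a finite multi-set of known size $n$.

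Next, I fix $N$ pairwise distinct scalars $x_1, \dots, x_N$ (for instance $x_s = s$) and evaluate the polynomials $P_j$ at each $x_s$, yielding the $N^2$ values $P_G^\Delta(x_s, Y_j)$. I claim that the set of pairs
\[
\{(x_s, Y_j) : 1 \le s, j \le N\},
\]
equipped with these evaluations, is a recovery datum. Writing $P_G^\Delta(X,Y) = \sum_{i=0}^n a_i(Y) X^i$ with $\deg a_i \le d < N$: Lagrange interpolation in $X$ across the $N$ nodes $x_s$ recovers each value $a_i(Y_j)$ from $P_j(x_1),\dots,P_j(x_N)$; and Lagrange interpolation in $Y$ across the $N$ nodes $Y_j$ then reconstructs each polynomial $a_i(Y)$. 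All arithmetic is finite and effective, so the algorithm terminates.

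The only genuine subtlety is the a priori bound $d$ on $\deg_Y P_G^\Delta$, which must be secured before the polynomial itself is known. This is precisely what the Leibniz formula applied to $L^\Delta(Y)$ provides, using only the diffusion matrix $\Delta$. Everything else is routine bivariate polynomial interpolation, and the phrase \emph{sufficiently but finitely many} in the statement is quantified by $N = \max(|G|, d) + 1$.
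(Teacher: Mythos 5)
Your proof is correct and follows essentially the same route as the paper's: bound the bi-degree of $P_G^\Delta$ a priori from $(G,\Delta)$, reconstruct finitely many slices $P_G^\Delta(X,p^{1-r})$ as characteristic polynomials from the given multi-sets, and recover the coefficients by bivariate interpolation at finitely many nodes. If anything, your explicit bound $d=\sum_{a}\max_{b}\alpha_{ab}$ on $\deg_Y P_G^\Delta$ is handled more carefully than the paper's count of $b_n=\tfrac12 n(n-1)$ interpolation nodes, which tacitly assumes the diffusion exponents do not push the $Y$-degree beyond the number of edges.
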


\begin{proof}
Since all  graphs $G_r$ are simple 
and have the same underlying combinatorial graph with $n$ vertices and
\[
\absolute{E}\le \frac12n(n-1)=:b_n
\]
edges, it follows that the number of places to interpolate 
\[
P(X,Y)=P_{G}^\Delta(X,Y)
\]
is bounded.
As $n$ is given as the size of each multi-set $\Spec(L_r)$,
it follows that $b_n$ such spectra are sufficient in order to reproduce the characteristic polynomials
\[
P(X,p^{1-r})=P_r(X)
\]
of the graphs $G_r$. Evaluating the $b_n$ polynomials $P_r(X)$ at $b_n$ places $x_s\in\mathds{R}$ yields pairs
$(x_s,p^{1-r})$ which form a recovery datum, as now interpolation of $P(X,Y)$ is possible. Together, this is an algorithm terminating after finitely many steps.
\end{proof}

%%%%%%%%%%%%%%%%%%%%%%%%%%%%%%
%{Recovering Spectral Curves in a Two-Player Game}

The question is now, whether it is possible to extract distinguished multi-sets $\Spec(L_r)$ somehow by
 clustering spectral values.
 If it is allowed to  vary the prime number $p$, then this can be done in the following game:
 
 \begin{game}\label{game}
 Assume that you are allowed to choose diffusion parameters $\Delta$ once, and a prime number $p$ as many times as you wish.
 Then you will receive for each $p$ the multi-set $\Spec(\Lambda_G^\Delta)$ of an unknown simple finite connected graph. 
If you manage to recover the spectral curve for  the diffusion pair $(G,\Delta)$ from these spectra, then you win, otherwise you lose.
 \end{game}
 
 The following theorem states that there exist winning strategies for the Game \ref{game}:

\begin{thm}\label{winTheGame}
For the $p$-adic Laplacian associated with any diffusion pair, 
there exists a winning strategy in order to obtain distinguished multi-sets $\Spec(L_r)$ with $r\in\mathds{Z}$.
\end{thm}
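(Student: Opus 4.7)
The plan is to exhibit a two-phase adaptive strategy for the player. In Phase I, the player uses two preliminary queries to recover $n=|G|$ and the ordinary Laplacian spectrum $\Spec(L_1^\Delta)$. In Phase II, the player chooses a single sufficiently large prime $p$ and uses magnitude-clustering on the observed spectrum to recover the multi-sets $\Spec(L_r^\Delta)$ for all $r$ in the finite range required by Theorem \ref{existenceRecoveryDatum}.

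For Phase I, observe that the weighted Laplacian $L_1^\Delta$ has weights $p^{\alpha_{ab}\cdot 0}=1$, so it coincides with the ordinary combinatorial Laplacian of $G$ and is independent of $p$. The $n$-element multi-set $\Spec(L_1^\Delta)$ therefore appears identically in the two observed spectra $\Spec(\Lambda_G^\Delta)_{p_1}$ and $\Spec(\Lambda_G^\Delta)_{p_2}$ for any two primes $p_1\ne p_2$. On the other hand, for $r\ne 1$ the eigenvalues of $L_r^\Delta$ are algebraic functions of $Y=p^{1-r}$ and depend non-trivially on $p$, so for generic $p_1\ne p_2$ the multi-sets $\Spec(L_r^\Delta)_{p_1}$ and $\Spec(L_r^\Delta)_{p_2}$ are disjoint. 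Taking the multi-set intersection of the two observed spectra then isolates $\Spec(L_1^\Delta)$, whose size is $n$.

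For Phase II, the key ingredient is a two-sided size estimate on the non-zero eigenvalues of $L_r^\Delta$: there exist positive constants $c_1,c_2$ depending only on $(G,\Delta)$ such that for each $r\le 1$ and each non-zero eigenvalue $\lambda$ of $L_r^\Delta$,
\[
c_1\,p^{\alpha_{\min}(1-r)} \;\le\; |\lambda| \;\le\; c_2\,p^{\alpha_{\max}(1-r)},
\]
with symmetric estimates for $r\ge 1$. The upper bound follows from Gerschgorin's circle theorem applied to $L_r^\Delta$. The lower bound rests on a Fiedler-type estimate: since $G$ is connected, the algebraic connectivity of any weighted Laplacian on $G$ is at least a combinatorial constant $c(G)$ times the minimum edge weight $p^{\alpha_{\min}(1-r)}$, and this propagates to every non-zero eigenvalue. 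Consequently, once the player chooses $\Delta$ so that the $\alpha_{ab}$ are tightly clustered (for instance drawn from $\mathset{M,M+1,\ldots}$ with $M$ large compared to the expected range $R$ on $|r|$), the intervals $I_r=[c_1 p^{\alpha_{\min}(1-r)},\,c_2 p^{\alpha_{\max}(1-r)}]$ become pairwise disjoint for $|r|\le R$ provided $p$ is large. Clustering the observed eigenvalues by these intervals and adjoining the eigenvalue $0$ (which is simple in each $\Spec(L_r^\Delta)$, as $G$ is connected) then yields the distinguished multi-sets $\Spec(L_r^\Delta)$; Theorem \ref{existenceRecoveryDatum} concludes by recovering the spectral polynomial.

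The main obstacle is the uniform lower bound $c_1\,p^{\alpha_{\min}(1-r)}$ on the non-zero eigenvalues of $L_r^\Delta$, with $c_1$ independent of $r$ and $p$: without this, magnitude bands from distinct values of $r$ could interleave and clustering would fail. This is where the connectedness of $G$ enters crucially, via a Fiedler-type inequality for weighted Laplacians; the argument uses the monotonicity of Laplacian eigenvalues under the weights to reduce to the uniformly weighted Laplacian of $G$. A secondary, book-keeping difficulty is coordinating the initial choice of $\Delta$ with the range $R=R(n,\Delta)$ of $r$-values required downstream, so that the compression ratio $\alpha_{\max}/\alpha_{\min}$ is close enough to $1$ to permit band separation on the full range $|r|\le R$.
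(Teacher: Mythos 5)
Your strategy is essentially the paper's: anchor on the fact that $\Spec(L_1)=\Spec(L)$ is independent of $p$, then push $p$ large so that the remaining eigenvalues separate into magnitude bands indexed by $r$, and feed the resulting multi-sets into Theorem \ref{existenceRecoveryDatum}. Where the paper simply cites Fiedler for the all-weights-equal case and gestures at ``Matrix Perturbation Theory'' for the general case restricted to $r\le 1$, you make the separation mechanism explicit: Gerschgorin for the upper bound $c_2p^{\alpha_{\max}(1-r)}$ and eigenvalue monotonicity of weighted Laplacians (a weighted Fiedler bound) for the lower bound $c_1p^{\alpha_{\min}(1-r)}$ on nonzero eigenvalues. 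That is a genuine improvement in rigor, and your Phase~I (intersecting the spectra for two primes to isolate the $p$-independent cluster) is a cleaner extraction of the $r=1$ reference cluster than the paper's.

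Two caveats. First, a small one: $0$ lies in $\Spec(L_r)$ for \emph{every} $r$ (each $L_r$ is a weighted Laplacian of the same connected graph), so the multi-set intersection in Phase~I contains $0$ with infinite multiplicity rather than isolating an $n$-element set; you must discard $0$ and read off $n$ from the $n-1$ nonzero common eigenvalues. You handle $0$ correctly in Phase~II but not in Phase~I. Second, and more substantively: the band-disjointness condition for consecutive $r$ reads $(\alpha_{\max}-\alpha_{\min})\absolute{1-r}<\alpha_{\min}$ (up to the constant $c_1/c_2$), so when the $\alpha_e$ are distinct integers the spread $\alpha_{\max}-\alpha_{\min}$ is at least $\absolute{E}-1$, while the number of bands needed for naive interpolation grows like $\deg_Y P\approx n\alpha_{\max}$; the requirement $M\gtrsim R\cdot(\absolute{E}-1)$ with $R\gtrsim nM$ is then circular. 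You flag this as ``book-keeping'' but do not resolve it; it is only non-circular if one accepts the paper's claim in Theorem \ref{existenceRecoveryDatum} that $b_n=\tfrac12 n(n-1)$ spectra suffice (a bound depending on $n$ alone), and reads the Game as permitting $\Delta$ to be chosen as a rule depending on $n$. To be fair, the paper's own proof has exactly the same unexamined gap hidden inside its appeal to perturbation theory for $r\le 1$, so your version is at worst no less complete and at best more honest about where the difficulty sits.
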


\begin{proof}
In the case that the choice of diffusion parameters is 
to have them all equal to $1$ (when beloning to an edge, otherwise, it is zero), a winning strategy
for connected graphs is
to pick a large prime $p$.
In this case, we have
\[
L_r=p^{1-r}L
\]
where $L$ is the Laplacian of the graph $G$.

\smallskip
Since the non-zero part of the spectrum of $L$
lies inside a compact interval not containing $0$, as has been proven by \cite{Fiedler1973}, 
it then suffices to ask for higher and higher prime numbers until there are increasing gaps between clusters with relatively small inter-cluster distances between neigbouring points. With increasing $p$, this phenomenon becomes more and more clearly visible. 

\smallskip
If not all parameters are chosen equal to $1$, we restrict to $r\le 1$  and again vary the prime $p$.
From Matrix Perturbation Theory \cite{Baumgaertel1985},
we get that 
if $p$ is sufficiently large, then in the range $r\le 1$, again the inter-cluster distances of neighbouring eigenvalues will be smaller than the intra-cluster distances between neighbouring clusters. Hence, choosing $p$ sufficiently large, again removes overlaps between the clusters.

\smallskip
Since in both cases of diffusion parameter choices, the spectrum of $L$ remains fixed for any choice of varying the prime $p$,  the reference cluster for $r=1$
 can be extracted, and then the spectra $\Spec(L_r)$ for 
  more  values of $r\in\mathds{Z}$, or of $r\le 1$ in the second case. After having extracted sufficently many of these finite spectra, one can proceed to the interpolation method and compute a recovery datum, as  now the requirements for Theorem \ref{existenceRecoveryDatum} are met.
\end{proof}

%%%%%%%%%%%%%%%%%%%%%%%%%%%%
\section{Spectral Curves Which are Separating}

In this section, we first separate pairs of non-isomrophic, but isospectral, graphs via spectral curves for suitable diffusion pairs using analytic matrix perturbation theory. After that, we prove for every finite graph the existence of a diffusion pair such that the graph can be reconstructed from the spectral polynomial. Although this generalises the first result, we believe that the matrix perturbation method is of general interest, nevertheless.

\begin{figure}[h]
\[
\xymatrix@C=10pt{
& &&7\ar@{-}[d]^2\ar@{-}[drr]&\\
&1\ar@{-}[urr]\ar@{-}[dl]&&8\ar@{-}[rr]\ar@{-}[ll]&&6\\
2\ar@{-}[drr]&&&&&&5\ar@{-}[ul]\\
&&3\ar@{-}[rr]&&4\ar@{-}[urr]
}
\qquad
\xymatrix{
&1\ar@{-}[r]\ar@{-}[ddl]&8\ar@{-}[dr]\ar@{-}[ddr]_2&\\
2\ar@{-}[ur]\ar@{-}[d]&&&7\ar@{-}[d]\\
3\ar@{-}[dr]&&&6\ar@{-}[dl]\\
&4\ar@{-}[r]&5
}
\]
\caption{A pair of non-isomorphic, but isospectral graphs whose first Betti number is  $3$.}\label{isospectralPair}
\end{figure}
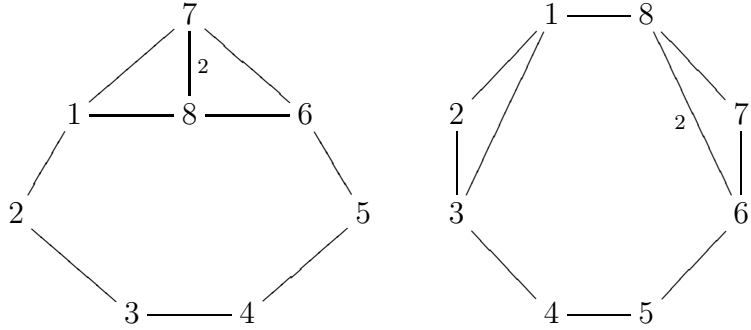

\begin{example}

According to \cite{MM2016}, the two graphs in Figure \ref{isospectralPair}
are isospectral. Their first Betti number equals $3$. This is the smallest example of 
an isospectral pair of non-isomorphic simple graphs without bridges.
The label "2" on an edge indicates a diffusion parameter value of $2$, i.e.\ an edge weight $Y^2$. Unlabelled edges have diffusion parameter value $1$, i.e.\ edge weight $Y$.
Their respective spectral polynomials $P_1$ for the left, and $P_2$ for the right graph of Figure \ref{isospectralPair} are:
\begin{align*}
P_1(X,Y)&=
\det\left(\begin{smallmatrix}
X-3Y&Y&0&0&0&0&Y&Y\\
Y&X-2Y&Y&0&0&0&0&0\\
0&Y&X-2Y&Y&0&0&0&0\\
0&0&Y&X-2Y&Y&0&0&0\\
0&0&0&Y&X-2Y&Y&0&0\\
0&0&0&0&Y&X-3Y&Y&Y\\
Y&0&0&0&0&Y&X-(2Y+Y^2)&Y^2\\
Y&0&0&0&0&Y&Y^2&X-(2Y+Y^2)
\end{smallmatrix}
\right)
\\[3mm]
P_2(X,Y)&=
\det\left(
\begin{smallmatrix}
X-3Y&Y&Y&0&0&0&0&Y\\
Y&X-2Y&Y&0&0&0&0&0\\
Y&Y&X-3Y&Y&0&0&0&0\\
0&0&Y&X-2Y&Y&0&0&0\\
0&0&0&Y&X-2Y&Y&0&0\\
0&0&0&0&Y&X-(2Y+Y^2)&Y&Y^2\\
0&0&0&0&0&Y&X-2Y&Y\\
Y&0&0&0&0&Y^2&Y&X-(2Y+Y^2)
\end{smallmatrix}
\right)
\end{align*}
Their tangent cones $T_1$ of $P_1$ and $T_2$ of $P_2$ are:
\begin{align*}
T_1(X,Y)&=
\det
\left(
\begin{smallmatrix}
X-3Y&Y&0&0&0&0&Y&Y\\
Y&X-2Y&Y&0&0&0&0&0\\
0&Y&X-2Y&Y&0&0&0&0\\
0&0&Y&X-2Y&Y&0&0&0\\
0&0&0&Y&X-2Y&Y&0&0\\
0&0&0&0&Y&X-3Y&Y&Y\\
Y&0&0&0&0&Y&X-2Y&0\\
Y&0&0&0&0&Y&0&X-2Y
\end{smallmatrix}
\right)
\\[3mm]
T_2(X,Y)&=\det
\left(
\begin{smallmatrix}
X-3Y&Y&Y&0&0&0&0&Y\\
Y&X-2Y&Y&0&0&0&0&0\\
Y&Y&X-3Y&Y&0&0&0&0\\
0&0&Y&X-2Y&Y&0&0&0\\
0&0&0&Y&X-2Y&Y&0&0\\
0&0&0&0&Y&X-2Y&Y&0\\
0&0&0&0&0&Y&X-2Y&Y\\
Y&0&0&0&0&0&Y&X-2Y
\end{smallmatrix}
\right)
\end{align*}
These are  spectral polynomials of the two bridgeless graphs of genus two, where each edge has the same variable $Y$, shown in Figure \ref{genus2pair}.
According to \cite[Thm.\ 3.1]{MM2016}, these graphs are not isospectral, because they are not isomorphic.
It follows that the two tangent cones 
$T_1$ and $T_2$
are not equal. 

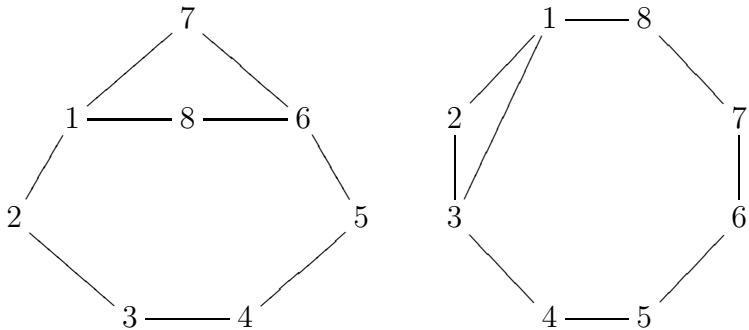
\begin{figure}[h]
\[
\xymatrix@C=10pt{
& &&7\ar@{-}[drr]&\\
&1\ar@{-}[urr]\ar@{-}[dl]&&8\ar@{-}[rr]\ar@{-}[ll]&&6\\
2\ar@{-}[drr]&&&&&&5\ar@{-}[ul]\\
&&3\ar@{-}[rr]&&4\ar@{-}[urr]
}
\qquad
\xymatrix{
&1\ar@{-}[r]\ar@{-}[ddl]&8\ar@{-}[dr]&\\
2\ar@{-}[ur]\ar@{-}[d]&&&7\ar@{-}[d]\\
3\ar@{-}[dr]&&&6\ar@{-}[dl]\\
&4\ar@{-}[r]&5
}
\]
\caption{A pair of non-isomorphic bridgeless graphs whose first Betti number is $2$. According to \cite{MM2016}, it follows that they are not isospectral.}\label{genus2pair}
\end{figure}

\smallskip
We saw that replacing one certain edge weight in each graph by $Y^2$ leads to two polynomials $P_1(X,Y)$ and $P_2(X,Y)$ 
which are not the same. So, in this case, the isospectral pair is separated by these two polynomials.
It only happens that
\[
P_1(X,1)=P_2(X,1)
\]
resulting in two identical characteristic polynomials for the two non-isomorphic graphs. This example motivates the remainder of this article.
\end{example}

%%%%%%%%%%%%%%%%%%%%%%5
\subsection{Separating Isospectral Pairs via Matrix Perturbation}

An introduction to matrix perturbation theory can be found in \cite{Baumgaertel1985}. We will use this method in order to construct distinct spectral polynomials for non-isomorphic, but isospectral graphs. All our calculations are explicit and most of them reproduced here, even if many can also be found in that bibliographic reference in a more general setting.

\smallskip
Let $k\neq\ell$ be natural numbers in $\mathset{1,\dots,n}$. We define the matrix
\[
U(k,\ell)=(u_{ij})
\]
with
\[
u_{ij}=
\begin{cases}
1,&i=j=k,\;\text{or}\;i=k=\ell
\\
-1,&(i,j)=(k,\ell),\;\text{or}\;(i,j)=(\ell,k)
\\
0,&\text{otherwise}
\end{cases}
\]
This is the Laplacian of the graph on $n$ vertices having 
precisely one undirected edge $(k,\ell)$, since $U(k,\ell)=U(\ell,k)$.

\smallskip
Let $v=(v_1,\dots,v_n)\in\mathds{R}^n$. Then
\[
v^\top U(k,\ell)v
=(v_k-v_\ell)^2
\]
as can be verified by a simple calculation.

\smallskip
Now, let $E$ be a symmetric set of pairs $(i,j)$ of numbers $1,\dots,n$ with $i\neq j$. Then
\[
U(E):=\sum\limits_{(k,\ell)\in E}
U(k,\ell)
\]
and we have
\[
v^\top U(E) v
=\sum\limits_{(k,\ell)\in E}(v_k-v_\ell)^2
\]
We can now define 
\[
\norm{v}_E^2=v^\top U(E) v
\]
The function $\norm{\cdot}_E$ is a semi-norm on $\mathds{R}^n$, and we have
\[
\norm{v}_E^2=0\quad\Leftrightarrow\quad
\forall\; (i,j)\in E\colon v_i=v_j
\]

\begin{Lemma}\label{separateEigen}
Let $L$ be the Laplacian of a simple graph $G$ with $n$ vertices, and let
$E,E'$ be two disjoint sets of pairs   from
$\mathset{1,\dots,n}$. Then there exists an eigenvector $v$ of $L$ such that
\[
\norm{v}_{E}^2\neq\norm{v}_{E'}^2
\]
\end{Lemma}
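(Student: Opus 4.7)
The plan is to argue by contradiction via the symmetric matrix $M := U(E) - U(E')$. Since $E \cap E' = \emptyset$ (and assuming $E \cup E' \neq \emptyset$, as otherwise the statement is vacuous), the off-diagonal entries of $M$ are $-1$ on pairs in $E$, $+1$ on pairs in $E'$, and $0$ elsewhere, so $M \neq 0$; moreover $M \mathbf{1} = 0$, since every edge-Laplacian $U(F)$ annihilates the constant vector. The claim is equivalent to: some eigenvector $v$ of $L$ satisfies $v^\top M v \neq 0$. Assume to the contrary that $v^\top M v = 0$ for every eigenvector $v$ of $L$.

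Since $L$ is real symmetric, $\mathds{R}^n$ splits as an orthogonal direct sum of eigenspaces $V_\lambda$. By hypothesis the quadratic form $v \mapsto v^\top M v$ vanishes on each $V_\lambda$, and polarization then forces the associated bilinear form to vanish on $V_\lambda \times V_\lambda$, i.e.\ $M V_\lambda \subseteq V_\lambda^\perp$. A first diagnostic is the trace: for any orthonormal eigenbasis $v_1, \dots, v_n$ of $L$,
\[
\sum_{i=1}^{n} v_i^\top M v_i \;=\; \operatorname{tr}(M) \;=\; \operatorname{tr}(U(E)) - \operatorname{tr}(U(E')),
\]
which is a nonzero multiple of the difference in the number of pairs contained in $E$ and $E'$. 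In particular, if $|E| \neq |E'|$ then some $v_i^\top M v_i$ is nonzero and the lemma follows immediately.

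The remaining case $|E| = |E'|$ is subtler. The plan is to pass to higher spectral moments
\[
\operatorname{tr}(L^k M) \;=\; \sum_{i} \lambda_i^k \, v_i^\top M v_i \;=\; 0, \qquad k \geq 0,
\]
and translate these identities into weighted closed-walk counts on $G$, with weights coming from the sign and sparsity pattern of $M$ dictated by the disjointness of $E$ and $E'$. The goal is to deduce $M = 0$ from these counting relations, which contradicts $E \neq E'$. The main obstacle is that this last step is genuinely delicate: the polarization argument only forces the diagonal blocks of $M$ in the eigenbasis of $L$ to vanish, leaving the off-diagonal blocks a priori free. In particular, whenever $G$ admits an automorphism exchanging $E$ and $E'$ --- for instance the path $1{-}2{-}3$ with $E = \{1,2\}$ and $E' = \{2,3\}$ --- every eigenvector of $L$ satisfies $\norm{v}_E = \norm{v}_{E'}$; so any successful argument must either invoke an implicit genericity hypothesis on $(G, E, E')$, or exploit a combinatorial invariant of $M$ strictly finer than its diagonal in the eigenbasis of $L$.
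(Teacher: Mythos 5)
Your proposal does not reach a proof, but for the best possible reason: the obstacle you isolate at the end is fatal to the statement itself, and your example is an actual counterexample to the Lemma as stated. Take $G$ the path $1\text{--}2\text{--}3$, $E=\mathset{(1,2)}$, $E'=\mathset{(2,3)}$. The Laplacian has simple eigenvalues $0,1,3$ with eigenvectors $(1,1,1)$, $(1,0,-1)$, $(1,-2,1)$, and each satisfies $(v_1-v_2)^2=(v_2-v_3)^2$ (the common values being $0$, $1$, $9$). Since every eigenvalue is simple, every eigenvector is a scalar multiple of one of these three, and both seminorms scale by the same factor under scalar multiplication; hence \emph{no} eigenvector of $L$ separates $E$ from $E'$. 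Your trace identity correctly settles the subcase $\absolute{E}\neq\absolute{E'}$, and your diagnosis --- that the hypothesis ``$v^\top Mv=0$ for all eigenvectors $v$'', with $M=U(E)-U(E')$, only constrains the diagonal blocks of $M$ in an eigenbasis of $L$ (via polarization within each eigenspace), leaving the off-diagonal blocks free --- is exactly where the statement breaks.

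For comparison, the paper's own proof commits precisely the cross-term fallacy you identify. It argues that if $\norm{v}_E=\norm{v}_{E'}$ for all eigenvectors $v$, then no linear combination $x$ of eigenvectors could have $\norm{x}_E\neq\norm{x}_{E'}$, ``because both seminorms scale identically when an eigenvector is multiplied with a scalar.'' But for $x=\sum_i c_iv_i$ one has
\[
\norm{x}_E^2-\norm{x}_{E'}^2=\sum_{i,j}c_ic_j\,v_i^\top M v_j,
\]
and the hypothesis kills only the terms with $v_i,v_j$ in a common eigenspace, not the cross terms between distinct eigenspaces. In the path example, $x=(1,0,0)$ gives $\norm{x}_E^2=1\neq 0=\norm{x}_{E'}^2$ even though all eigenvectors have equal seminorms, so the paper's implication is simply false. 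The Lemma therefore needs an additional hypothesis (e.g.\ excluding a symmetry of the triple $(L,E,E')$ of the kind your example exhibits, or reformulating the conclusion in terms of the bilinear form on pairs of eigenvectors), and the theorem on separating isospectral pairs, which applies the Lemma with $L=U(E_1\cap E_2)$, $E=E_1\setminus E_2$, $E'=E_2\setminus E_1$, inherits this gap. You should report this as an error in the paper rather than try to complete the proof as stated.
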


\begin{proof}
This follows from the fact that $L$ is symmetric, i.e.\ from the Spectral Theorem for symmetric real-valued matrices. Namely, if
for all eigenvectors $v$ of $L$ we had 
\[
\norm{v}_E=\norm{v}_{E'}
\]
then it would be impossible to generate via linear combination a vector $x\in\mathds{R}^n$ having 
$\norm{x}_E\neq\norm{x}_{E'}$, because both seminorms scale identically when an eigenvector is multiplied with a scalar.
\end{proof}

An equivalence of diffusion pairs $(G_1,\Delta_1)$ and $(G_2,\Delta_2)$ where $G_i$ are graphs on the same vertex set $V$, is given by a bijection $E_1\to E_2$ between the edge sets of $G_1$ and $G_2$ which takes a weighted edge to an edge having the same weight.
If two graphs $G_1,G_2$
are isospectral, then it is possible to define diffusion parameters $\Delta_1,\Delta_2$ such that there is an equivalence of diffusion pairs $(G_1,\Delta_1)\sim(G,\Delta_2)$. The following theorem shows that this already allows to distinguish non-isomorphic graphs, if suitable choices are taken.

\begin{thm}
Assume that $G_1,G_2$ is a pair of non-isomorphic, but isospectral graphs. Then there exist equivalent diffusion pairs
$(G_1,\Delta_1)$, $(G_2,\Delta_2)$ such that
\[
P_{G_1}^{\Delta_1}(X,Y)\neq P_{G_2}^{\Delta_2}(X,Y) 
\]
as bivariate polynomials.
\end{thm}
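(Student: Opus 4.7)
The plan is to use first-order matrix perturbation in the variable $Y$ together with Lemma~\ref{separateEigen}. Since $G_1$ and $G_2$ are isospectral, they share the same number of edges, so I fix a bijection $\phi\colon E(G_1)\to E(G_2)$. For a subset $E_1\subset E(G_1)$ with image $E_2:=\phi(E_1)$, I define $\Delta_i$ to assign diffusion parameter~$2$ to edges in $E_i$ and~$1$ to all other edges of $G_i$. This yields equivalent diffusion pairs with weighted Laplacians $L_i(Y)=Y\,L_{G_i}+(Y^2-Y)\,U(E_i)$, whose characteristic polynomials $P_{G_i}^{\Delta_i}(X,Y)=\det(XI-L_i(Y))$ coincide at $Y=1$ with the common characteristic polynomial of $L_{G_i}$. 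The separation must therefore be detected at the first Taylor coefficient in $Y-1$.

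By Jacobi's formula, $\partial_Y P_{G_i}^{\Delta_i}|_{Y=1}=-\operatorname{tr}\bigl(\operatorname{adj}(XI-L_{G_i})[L_{G_i}+U(E_i)]\bigr)$. Using $\operatorname{adj}(A)A=\det(A)I$ together with the identity $\operatorname{tr}\operatorname{adj}(XI-L_{G_i})=\partial_X\det(XI-L_{G_i})$, the $L_{G_i}$-contribution simplifies to $X\,\partial_X\det(XI-L_{G_i})-n\det(XI-L_{G_i})$, which depends only on the common spectrum and therefore cancels between the two graphs. The remaining difference
\[
\operatorname{tr}\bigl(\operatorname{adj}(XI-L_{G_2})\,U(E_2)\bigr)-\operatorname{tr}\bigl(\operatorname{adj}(XI-L_{G_1})\,U(E_1)\bigr)
\]
expands, via the spectral projectors $\Pi_\mu^{(i)}$ of $L_{G_i}$ onto the common eigenvalue~$\mu$, as a linear combination of mutually distinct polynomials in~$X$ with coefficients $\operatorname{tr}(\Pi_\mu^{(2)}U(E_2))-\operatorname{tr}(\Pi_\mu^{(1)}U(E_1))$. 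Linear independence thus reduces non-vanishing of the difference to producing $E_1,E_2$ and an eigenvalue~$\mu$ such that $\operatorname{tr}(\Pi_\mu^{(1)}U(E_1))\neq\operatorname{tr}(\Pi_\mu^{(2)}U(E_2))$.

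For this last step I would argue by contradiction. If no single-edge choice separates the graphs then, letting $\phi$ range over all bijections so that any $e\in E(G_1)$ may be paired with any $e'\in E(G_2)$, the function $e\mapsto\operatorname{tr}(\Pi_\mu^{(i)}U(e))=\|\Pi_\mu^{(i)}(e_k-e_\ell)\|^2$ must be constant on $E(G_i)$ and take the same value across $i=1,2$, for every eigenvalue~$\mu$. When each eigenvalue of $L_{G_i}$ is simple, Lemma~\ref{separateEigen} applied to $L_{G_i}$ with two disjoint single-edge subsets directly refutes this uniform-contribution scenario, completing the proof. The main obstacle I anticipate is the degenerate case, where $\operatorname{tr}(\Pi_\mu^{(i)}U(e))$ averages over a basis of the $\mu$-eigenspace and may remain edge-constant even when individual eigenvectors are not; handling this case would require passing to higher-order Taylor coefficients or to diffusion pairs with more than two distinct weights, applying Lemma~\ref{separateEigen} to pairs of multi-edge subsets rather than single edges in order to rule out the remaining symmetric configurations forced by non-isomorphism.
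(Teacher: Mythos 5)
Your reduction is internally coherent up to its last step: perturbing $L_i(Y)=Y\,L_{G_i}+(Y^2-Y)\,U(E_i)$ around $Y=1$, applying Jacobi's formula, and cancelling the spectrum-only part correctly reduces the problem to finding $E_1,E_2$ and a common eigenvalue $\mu$ with $\operatorname{tr}\bigl(\Pi_\mu^{(1)}U(E_1)\bigr)\neq\operatorname{tr}\bigl(\Pi_\mu^{(2)}U(E_2)\bigr)$. But the proof does not close, and the gap you flag yourself is genuine, not a technicality. Graph Laplacians of isospectral non-isomorphic pairs routinely have repeated eigenvalues (such pairs are typically produced by symmetric constructions), and for a degenerate $\mu$ the quantity $\operatorname{tr}\bigl(\Pi_\mu^{(i)}U(e)\bigr)$ is an average over the whole eigenspace. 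Lemma~\ref{separateEigen} only hands you \emph{some} eigenvector $v$ with $\norm{v}_{e}^2\neq\norm{v}_{e'}^2$; in the degenerate case that $v$ need not be adapted to the perturbation, its seminorm is not what enters the projector trace, and the average can be edge-constant even when individual eigenvectors are not. So your argument proves the theorem only under a simplicity hypothesis on $\Spec(L_{G_i})$ that the statement does not grant, and your proposed repairs (higher-order coefficients, more weights) are left as open tasks.

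The paper's proof takes a different base point for the perturbation, and that choice is exactly what dissolves the difficulty. Setting $C=E_1\cap E_2$ and $C_i=E_i\setminus E_{3-i}$, it studies $L_{\varepsilon,i}=U(C)+\varepsilon\,U(C_i)$, i.e.\ weight $1$ on the \emph{shared} edges and weight $2$ on the differing ones, and expands around $\varepsilon=0$. Both perturbations then act on one and the same unperturbed matrix $U(C)$, so the comparison takes place inside a single fixed eigenspace decomposition, and Lemma~\ref{separateEigen} is invoked once, for $U(C)$ with the two disjoint sets $C_1$ and $C_2$, to produce one eigenvector whose first-order corrections under the two perturbations differ. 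In your scheme the two unperturbed matrices $L_{G_1}$ and $L_{G_2}$ are different, which forces you to compare spectral projectors of two distinct operators; that is precisely where the degeneracy becomes unmanageable. If you want to keep your Jacobi-formula formalism, the fix is essentially to adopt the paper's weight assignment, so that the zeroth-order operator is common to both graphs and a single application of Lemma~\ref{separateEigen} suffices.
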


\begin{proof}
Denote the edge set of graph $G_i$ as $E_i$ for $i=1,2$, 
and 
let $C=E_1\cap E_2$, and $C_1=E_1\setminus E_2$, 
$C_2=E_2\setminus E_1$.
Since $G_1$ and $G_2$ are isospectral, it follows that
$E_1$ and $E_2$ have the same number of elements.

\smallskip
We are interested in the spectrum of the matrices
\[
L_{\varepsilon,i}=U(C)+\varepsilon U(C_i)
\]
for $i=1,2$, and with $\varepsilon >0$. In first order w.r.t.\ the parameter $\varepsilon$, we have
\begin{align}\label{varyLambda}
\lambda_{\varepsilon,i}^{(1)}=\lambda+\varepsilon\norm{v}_{C_i}^2
+\text{terms of higher order in $\varepsilon$}
\end{align}
where $v\in\mathds{R}^n$ is an eigenvector of $U(C)$ associated with eigenvalue $\lambda$, and $\lambda_{\varepsilon,i}^{(1)}$ approximates in first order an eigenvalue $\lambda_{\varepsilon,i}$ of $L_{\varepsilon,i}$.
According to Lemma \ref{separateEigen}, one can find an eigenvector $v$ of $U(C)$ such that
\[
\norm{v}_{C_1}\neq\norm{v}_{C_2}
\]
This implies that the first order approximations of the
eigenvalues $\lambda_{\varepsilon,i}$ are different.
It follows that the eigenvalues $\lambda_{\varepsilon,1}$ and $\lambda_{\varepsilon,2}$ are different for the range of $\varepsilon>0$ in which an analytic expansion in $\varepsilon$ is possible and $\varepsilon>0$ is sufficiently small.

\smallskip
Now, we have varied one eigenvalue of $U(C)$ analytically in two different ways. The other eigenvalues of $U(C)$ also vary analytically as in (\ref{varyLambda}), except that possibly it could be that some eigenvalues corresponding to $L_{\varepsilon,1}$ and $L_{\varepsilon,2}$ are equal
in this case. In any case, the spectrum of $U(C)=L_{\varepsilon,0}$ is varied continuously in two different manners. Hence, for $\varepsilon>0$ small, the discrete subsets $\Spec(L_{\varepsilon,1})$ and $\Spec(L_{\varepsilon,2})$ 
of $\mathds{R}$ are different. This implies that the corresponding characteristic polynomials 
\[
P_i(X,\varepsilon),\quad i=1,2
\]
are different. But these are evaluations of the spectral polynomials
\[
P_i(X,Y)
\]
evaluated at $Y=\varepsilon$ for all $\varepsilon>0$ sufficiently small, and where these polynomials are given
for edge weight maps:
\[
\Delta_i\colon C\cup E_i\to\mathds{N},\;
e\mapsto
\begin{cases}
1,&e\in C\\
0,&e\in E_i
\end{cases}
\]
It follows that these two spectral polynomials are different.

\smallskip
In order to obtain positive diffusion constants, now look at the spectrum of the matrix
\[
\varepsilon L_{\varepsilon,i}=\varepsilon U(C)+\varepsilon^2 U(C_i)
\]
which amounts to taking the diffusion parameters as
\[
\Delta_i\colon C\cup E_i,
\;e\mapsto
\begin{cases}
2,&e\in E_i\\
1,&e\in C
\end{cases}
\]
Hence, there exists a bijection $E_1\to E_2$ between sets of weighted edges, i.e.\ an equivalence of diffusion pairs $(G_1,\Delta_1)\sim(G_2,\Delta_2)$,
such that $P_{G_1}^{\Delta_1}(X,Y)\neq P_{G_2}^{\Delta_2}(X,Y)$ as asserted.
\end{proof}

%%%%%%%%%%%%%%%%%%%%%%%%5
\subsection{A Reconstruction Theorem}

Let $G=(V(G),E(G))$ be a graph with $n$ vertices.
We begin with the following observation:

\begin{thm}[Kel'mans, 1967]\label{Kelmans}
Let 
\[
P(X)=X^n-\sum\limits_{i=1}^{n-1}(-1)^ic_iX^{n-i}
\]
be the characteristic polynomial of the graph $G$. Then
\[
c_i=\sum\limits_{S\subset V\atop \absolute{S}=n-i}
T(G_S)
\]
where $T(H)$ is the number of spanning trees of graph $H$, and
$G_S$ is the quotient graph obtained by identifying all vertices in $S$ with a single vertex.
\end{thm}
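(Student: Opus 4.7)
The plan is to identify each coefficient $c_i$ with a sum of $i\times i$ principal minors of the graph Laplacian $L$, and then to interpret each such principal minor as a spanning-tree count of a contracted graph via an all-minors generalisation of the Kirchhoff matrix-tree theorem.

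First, I would use the standard determinantal expansion
\[
\det(XI-L)=\sum_{k=0}^n(-1)^k\sigma_k(L)\,X^{n-k},\qquad
\sigma_k(L)=\sum_{W\subset V,\,\absolute{W}=k}\det L[W,W],
\]
where $L[W,W]$ is the principal submatrix with rows and columns indexed by $W$. Matching coefficients with the stated form of $P(X)$ identifies $c_i$ with $\sigma_i(L)$, up to the sign fixed by the convention.

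Next, for $W\subset V$ and its complement $S:=V\setminus W$, I would invoke the all-minors matrix-tree theorem, which asserts that $\det L[W,W]$ equals the number of spanning forests of $G$ that decompose into exactly $\absolute{S}$ connected components, each containing precisely one vertex of $S$. The proof writes $L=BB^{\top}$ for $B$ the signed incidence matrix of an arbitrary orientation, applies the Cauchy--Binet formula to obtain $\det L[W,W]$ as a sum over $\absolute{W}$-subsets $F\subset E(G)$ of squared minors of $B_W$, and checks that each such squared minor equals $1$ precisely when the chosen edges form a spanning forest of the stated type and vanishes otherwise. I would then exhibit a bijection between such forests and spanning trees of the contracted multigraph $G_S$: contraction sends a forest of the above type to a connected acyclic subgraph of $G_S$ with $\absolute{W}+1$ vertices and $\absolute{W}$ edges, where acyclicity is preserved because a cycle in $G_S$ would either project back to a cycle in $F$ or force two distinct forest components to share a vertex of $S$; the inverse simply lifts each edge of a spanning tree of $G_S$ to the unique edge of $G$ it represents. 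Combining these facts yields $\det L[W,W]=T(G_S)$, and summing over $W$ with $\absolute{W}=i$, equivalently $S$ with $\absolute{S}=n-i$, produces the claimed identity.

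The conceptually substantive step is the all-minors matrix-tree theorem itself: the classical Kirchhoff theorem handles only $\absolute{W}=n-1$, and extending it to arbitrary principal minors requires the somewhat delicate sign-bookkeeping in the Cauchy--Binet expansion to see that the only forests contributing are those of the prescribed connectivity type. Since this generalisation is classical (essentially Kel'mans, with close relatives in Chaiken's matrix-forest theorem), I would cite it rather than reprove the sign computation in detail.
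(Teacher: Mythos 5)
The paper offers no proof of this theorem at all: it simply cites Kel'mans (1967) and Kel'mans--Chelnokov (1974). Your plan, by contrast, actually carries out the standard argument behind that citation, and it is correct: the expansion of $\det(XI-L)$ into sums of principal minors $\sigma_i(L)$, the identification $\det L[W,W]=T(G_S)$ for $S=V\setminus W$ via Cauchy--Binet applied to $L=BB^{\top}$ together with the characterisation of the nonsingular square submatrices of the incidence matrix, and the contraction bijection between forests whose components each meet $S$ in exactly one vertex and spanning trees of the multigraph $G_S$ (your bijection correctly accounts for the multi-edges that contraction creates, since each edge of $G_S$ remembers the edge of $G$ it came from; and acyclicity of the image is automatic once you note it is connected with $\absolute{W}+1$ vertices and $\absolute{W}$ edges). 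Two small remarks. First, the sign bookkeeping deserves one explicit line rather than the phrase ``up to the sign fixed by the convention'': as literally written, $P(X)=X^n-\sum_{i=1}^{n-1}(-1)^ic_iX^{n-i}$ forces $c_i=-\sigma_i(L)$, which is negative, so the statement in the paper appears to carry a sign typo (e.g.\ it should read $(-1)^{i-1}c_i$ or $+\sum(-1)^ic_i$); your proof establishes $\sigma_i(L)=\sum_{\absolute{S}=n-i}T(G_S)$, which is the intended content. Second, in your acyclicity discussion the phrase ``two distinct forest components sharing a vertex of $S$'' should be ``a single component containing two vertices of $S$''; this is cosmetic. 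Since the generalisation of Kirchhoff's theorem to arbitrary principal minors is classical, citing it rather than redoing the Cauchy--Binet sign computation is a reasonable choice and puts you on exactly the footing of the paper's own citation.
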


\begin{proof}
\cite{Kelmans1967,KC1974}
\end{proof}

We recover these quantities from the spectral polynomial  
$P(X,Y)$ as follows:
\[
c_i(G)=a_{n-i}(1)
\]
for $i=1,\dots, n-1$.

\smallskip
We define
\[
\mathcal{F}(G)=
\mathset{F\colon \text{$F$ is a forest with $V(F)=V(G)$ and $E(F)\subset E(G)$}}
\]
and call $\mathcal{F}(G)$ the \emph{spanning forest set of $G$}. The following subsets of $\mathcal{F}(G)$ are of interest:
\[
\mathcal{F}^i(G)=\mathset{F\in\mathcal{F}(G)\colon
b_0(F)=i}
\]
This allows us to formulate a generalisation of Theorem \ref{Kelmans}  which is also a known result, but formulated here in the guise of spectral curves:

\begin{thm}[Buslov, 2014]
Let $(G,\Delta)$ be a diffusion pair.
Then the coefficients of the spectral polynomial  
\[
P_{G}^{\Delta}(X,Y)
=\sum\limits_{i=1}^na_i(Y)X^i
\]
of a diffusion pair $(G,\Delta)$
are given as
\[
a_i(Y)=(-1)^{n-i}
\sum\limits_{F\in\mathcal{F}^i(G)} \pi_F,\quad
\pi_F=\prod\limits_{e\in E(F)}Y^{\alpha_e}
\]
for $i=1,\dots,n$.
\end{thm}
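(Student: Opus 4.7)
The plan is to identify the coefficients of the characteristic polynomial
\[
P_G^\Delta(X, Y) = \det\bigl(XI - L_G^\Delta(Y)\bigr)
\]
with sums of principal minors of $L = L_G^\Delta(Y)$ and then apply a weighted Matrix-Tree identity to each principal minor. Expanding $\det(XI - L) = \prod_j(X - \lambda_j)$ over the eigenvalues of $L$, the coefficient of $X^i$ equals $(-1)^{n-i} e_{n-i}(\lambda_1, \dots, \lambda_n)$; since $L$ is symmetric, elementary symmetric functions of its eigenvalues agree with the sums of its principal minors, so
\[
a_i(Y) = (-1)^{n-i} \sum_{S \subset V(G),\; \absolute{S} = n-i} \det\bigl(L[S,S]\bigr).
\]

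For the central step, I would factor $L = D W D^\top$, where $D$ is the signed vertex-edge incidence matrix of $G$ (for an arbitrary orientation of its edges) and $W$ is the diagonal matrix with entries $Y^{\alpha_e}$. Applying Cauchy--Binet yields
\[
\det\bigl(L[S,S]\bigr) = \sum_{E' \subset E(G),\; \absolute{E'} = \absolute{S}} \bigl(\det D_{S,E'}\bigr)^2 \prod_{e \in E'} Y^{\alpha_e}.
\]
A standard fact about signed incidence matrices yields $(\det D_{S,E'})^2 \in \{0,1\}$, with value $1$ precisely when $E'$ spans a forest whose every connected component contains exactly one vertex of $V(G) \setminus S$; one obtains this by contracting all vertices of $V(G)\setminus S$ to a single ``root'' vertex, thereby reducing the assertion to the classical statement that the maximal nontrivial minors of the incidence matrix of a tree are $\pm 1$. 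Consequently $\det(L[S,S]) = \sum_F \pi_F$, the sum running over spanning forests with the indicated property.

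To finish, I sum over $S$ with $\absolute{S} = n - i$, equivalently over $R := V(G) \setminus S$ with $\absolute{R} = i$. For each spanning forest $F$ with $b_0(F) = i$ components $T_1, \dots, T_i$, the admissible $R$ are precisely the sets obtained by picking one vertex from each component, of which there are $\prod_j \absolute{V(T_j)}$; this is the same as equipping $F$ with a root in every tree, so the double sum collects exactly the rooted spanning forests with $i$ trees. Reading $\mathcal{F}^i(G)$ in the Buslov sense as such rooted spanning forests then produces the claimed identity.

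The main obstacle is the incidence-matrix lemma giving $(\det D_{S,E'})^2 \in \{0,1\}$ together with its combinatorial characterisation, and I expect to handle it by induction on $\absolute{E'}$, peeling off a leaf from each component of the subgraph on $E'$ and tracking the $\pm 1$ sign after contraction of $V(G) \setminus S$. Once that incidence-matrix fact is in place, the rest of the argument is bookkeeping: Cauchy--Binet organises the edge-subset sum, and re-indexing via $R \leftrightarrow S$ turns it into the rooted-forest sum appearing in the statement.
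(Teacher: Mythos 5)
The paper gives no proof of this statement at all --- it only cites \cite[Thm.\ 2]{Buslov2016} --- so your argument is necessarily a different route: it is the standard self-contained derivation, and it is correct. Identifying $(-1)^{n-i}a_i$ with the sum of principal $(n-i)\times(n-i)$ minors, factoring $L=DWD^\top$ through the signed incidence matrix, applying Cauchy--Binet, and invoking the all-minors incidence lemma $(\det D_{S,E'})^2\in\{0,1\}$ (with value $1$ exactly when $E'$ is a forest each of whose components meets $V(G)\setminus S$ in one vertex) is the canonical proof of Buslov-type forest expansions, and the leaf-peeling induction you sketch for the incidence lemma is routine and goes through. What your derivation buys, beyond supplying a proof the paper outsources, is that it exposes an inaccuracy in the statement as written: with the paper's own definition of $\mathcal{F}^i(G)$ as the set of \emph{unrooted} spanning forests with $b_0(F)=i$, the correct coefficient is $a_i(Y)=(-1)^{n-i}\sum_F\gamma(F)\,\pi_F$ with $\gamma(F)=\prod_j\absolute{V(T_j)}$ the product of the component sizes, i.e.\ a sum over \emph{rooted} forests, exactly as your re-indexing over $R=V(G)\setminus S$ shows. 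Already for $G=K_2$ with one edge of weight $Y^{\alpha}$ one has $\det(XI-L)=X^2-2Y^{\alpha}X$, whereas the stated formula yields $a_1=-Y^{\alpha}$. Your closing remark that $\mathcal{F}^i(G)$ must be read ``in the Buslov sense'' as rooted forests is precisely the needed correction, and it should be stated as a correction rather than a reading convention, since the paper later relies on $\absolute{a_{1k}}\le 1$ in the proof of the Reconstruction Theorem, a bound that holds only for the unrooted normalisation (with the correct rooted count one gets $\absolute{a_{1k}}=n$ for spanning-tree monomials, which is harmless there but must be said).
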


\begin{proof}
\cite[Thm.\ 2]{Buslov2016}.
\end{proof}

Let $G'$ be another graph. An isomorphism
\[
\mathcal{F}(G)\cong\mathcal{F}(G')
\]
between spanning forest sets
is given by a bijection $f\colon \mathcal{F}(G)\to\mathcal{F}(G')$ and
an ismorphism $F\cong F'$ with
$F\in\mathcal{F}(G)$ and $F'=f(F)$ for all $F\in\mathcal{F}(G)$.

\begin{Lemma}\label{spanningForestUnique}
Let $G,G'$ be two graphs on $n$ vertices. Then
$G$ and $G'$ are isomorphic if and only if the strict forest subsets $\mathcal{F}(G)$ and $\mathcal{F}(G')$ are isomorphic.
\end{Lemma}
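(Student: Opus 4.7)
The forward direction is immediate: an isomorphism $\phi : V(G)\to V(G')$ of graphs sends each $F\in\mathcal{F}(G)$ to the spanning forest $(V(G'),\phi(E(F)))\in\mathcal{F}(G')$, and $\phi|_{V(F)}$ is an isomorphism of forests $F\cong\phi(F)$. So $(\Rightarrow)$ holds with $f(F):=\phi(F)$.

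For the backward direction, let $(f,\{\phi_F\})$ be an isomorphism of spanning forest sets. The first step is to extract an edge bijection. Since isomorphism preserves the number of connected components of a forest, $f$ restricts to a bijection $\mathcal{F}^{n-1}(G)\to\mathcal{F}^{n-1}(G')$. But $\mathcal{F}^{n-1}(G)$ is in canonical bijection with $E(G)$ via the unique non-trivial component of each single-edge forest, and similarly for $G'$, so we obtain a bijection $\tilde f : E(G)\to E(G')$. In particular $|E(G)|=|E(G')|$, and by the same argument $b_0(G)=\min\{b_0(F):F\in\mathcal{F}(G)\}=b_0(G')$, so $G$ and $G'$ have the same number of connected components.

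The hard part is to lift $\tilde f$ to a vertex bijection. I would fix a maximum-size spanning forest $F_0\in\mathcal{F}^{b_0(G)}(G)$ (that is, a spanning tree of each component of $G$) and set $\Phi:=\phi_{F_0}:V(G)\to V(G')$; this is automatically a bijection mapping $E(F_0)$ onto $E(f(F_0))$. It remains to show $\Phi$ also maps each non-tree edge $e\in E(G)\setminus E(F_0)$ into $E(G')$. For such $e$, let $e_{\ast}\in E(F_0)$ be an edge on the unique cycle of $F_0\cup\{e\}$; then $F_e:=(F_0\setminus\{e_{\ast}\})\cup\{e\}$ is again a maximum-size spanning forest of $G$. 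By comparing $\phi_{F_0}$ and $\phi_{F_e}$ on the common sub-forest $F_0\setminus\{e_{\ast}\}\in\mathcal{F}(G)$, one forces $\{\Phi(u),\Phi(v)\}\in E(f(F_e))\subset E(G')$ for $e=\{u,v\}$, and hence $\Phi$ is a graph isomorphism.

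The main obstacle is coherence: each $\phi_F$ is defined only up to $\mathrm{Aut}(F)$, so different forests may, at first glance, produce incompatible vertex bijections. Overcoming this requires choosing $\phi_F$ consistently on the shared structure of ``nearby'' forests related by the elementary tree-edge exchanges $F_0\rightsquigarrow F_e$ used above, and then propagating the agreement across the cycle space of $G$. Once this consistency is pinned down, $\Phi$ globally realises $\tilde f$ as an isomorphism $G\cong G'$. In the disconnected case the argument is applied componentwise, using the empty-forest isomorphism to match components of equal cardinality between $G$ and $G'$.
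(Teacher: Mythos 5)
Your forward direction and the extraction of the edge bijection from $\mathcal{F}^{n-1}$ are fine, but the backward direction has a genuine gap exactly where the work is: the coherence of the family $\{\phi_F\}$. The paper's notion of an isomorphism $\mathcal{F}(G)\cong\mathcal{F}(G')$ supplies, for each $F$, \emph{some} isomorphism $\phi_F\colon F\to f(F)$ with no compatibility whatsoever between $\phi_{F}$ and $\phi_{F'}$ for distinct forests --- not even on a common subforest, since $F_0\setminus\{e_{\ast}\}$ is itself an element of $\mathcal{F}(G)$ carrying its own independent $\phi_{F_0\setminus\{e_{\ast}\}}$, whose target $f(F_0\setminus\{e_{\ast}\})$ need not even sit inside $f(F_0)$ or $f(F_e)$. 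So the step ``by comparing $\phi_{F_0}$ and $\phi_{F_e}$ on the common sub-forest \dots one forces $\{\Phi(u),\Phi(v)\}\in E(G')$'' is not justified by the hypotheses; and your closing sentence (``Overcoming this requires choosing $\phi_F$ consistently \dots Once this consistency is pinned down \dots'') states the problem rather than solving it. Producing a single vertex bijection compatible with all the $\phi_F$ is essentially equivalent to producing the graph isomorphism itself, so as written the argument is circular at its crucial point. (The remark about the ``empty-forest isomorphism'' matching components also carries no information: the edgeless spanning forest is $n$ isolated vertices.)

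The paper avoids trying to glue the $\phi_F$ into a vertex map. It first disposes of the case where $G$ is a tree (a tree is its own unique spanning forest with minimal $b_0$, so $T\cong T'$ directly), handles forests componentwise, and for connected $G$ with $b_1(G)>0$ reduces to the situation where the two spanning-forest sets literally coincide on a common vertex set; it then argues that the set of spanning trees already pins down the edge set, since every edge of a connected graph lies in some spanning tree. If you want to repair your route, you should either adopt that reduction or prove a lemma showing that the isomorphism-type data of $\mathcal{F}(G)$ (equivalently, the number of spanning forests of $G$ in each isomorphism class of forests) determines $G$; the elementary tree-edge exchange alone does not do this.
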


\begin{proof}
If the graphs are isomorphic, then clearly their strict forest sets are isomorphic.

\smallskip
Assume now that $G=T$ and $G'=T'$ are trees.
Since trees are their own spanning forests,  we  clearly must have that $T\cong T'$.

\smallskip
Now, assume that $G$ is not a tree. If $G$ is a forest,
then we can apply the result for trees to each individual connected component of $G$. So, we may assume that $b_1(G)>0$, and that $G$ is connected.
If $\mathcal{F}(G)\cong \mathcal{F}(G')$, then w.l.o.g.\ we may assume that these two sets are equal, and that the vertex sets of the two graphs coincide. 
Let $T$ be a spanning tree of $G$. Then
\[
\mathcal{F}(T)\subset\mathcal{F}(G)=\mathcal{F}(G')
\]
Hence, by symmetry, each spanning tree of $G$ is a spanning tree of $G'$ and vice versa. This implies that
$G=G'$, as otherwise there is an edge of $G$ not in $G'$. But then a spanning tree of $G$ containing that edge is not a spanning tree of $G'$, a contradiction.
Hence, $G\cong G'$.
\end{proof}

\begin{thm}[Reconstruction Theorem]\label{reconstruct}
For any finite graph $G$, there exists a $p$-adic Laplacian given by diffusion parameters $\Delta$ such that the diffusion pair $(G,\Delta)$ has a spectral polynomial $P(X,Y)$ such that for any diffusion pair $(G',\Delta)$ its spectral polynomial $P'(X,Y)$ satisfies:
\[
P(X,Y)=P'(X,Y)\quad\Leftrightarrow\quad (G,\Delta)\cong (G',\Delta)
\]
In other words, the isomorphism class of $G$ is the unique family of graphs having spectral polynomial $P(X,Y)$.
\end{thm}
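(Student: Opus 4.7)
The plan is to pick the diffusion parameters $\Delta$ on $G$ so that the spanning forest structure of $G$ is faithfully encoded as a sum of pairwise distinct monomials in each coefficient $a_i(Y)$ of $P_G^\Delta(X,Y)$. Once this encoding is in place, the reconstruction reduces to the forest-set criterion of Lemma \ref{spanningForestUnique}.

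\textbf{Choice of $\Delta$.} Enumerate the edges $e_1,\dots,e_m$ of $G$ and set $\alpha_{e_j}=2^{j-1}$, placing the corresponding power of two at the matrix position of $\Delta$ indexed by $e_j$. These weights satisfy the \emph{distinct subset sums} property: for any two subsets $S,S'\subseteq E(G)$, one has $\sum_{e\in S}\alpha_e=\sum_{e\in S'}\alpha_e$ if and only if $S=S'$, because each subset is uniquely recovered from the binary expansion of its weight sum. Any other weight choice with distinct subset sums would serve equally well.

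\textbf{Main argument.} By Buslov's theorem (cited above), $a_i(Y)=(-1)^{n-i}\sum_{F\in\mathcal{F}^i(G)}Y^{\sigma(F)}$ with $\sigma(F)=\sum_{e\in E(F)}\alpha_e$. The distinct-subset-sum property makes $\sigma$ injective on $\mathcal{F}^i(G)$, and, more strongly, the binary expansion of each exponent $\sigma(F)$ recovers the indices $j_e$ of the edges in $E(F)$, i.e.\ the full edge subset $E(F)\subseteq E(G)$. Reading off the exponents appearing in each $a_i(Y)$ therefore reconstructs, for every $i$, the complete collection of spanning forests of $G$ with $i$ components, each labelled by its weighted edge multiset. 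If $(G',\Delta)$ has the same spectral polynomial, the same extraction applied to $a_i'(Y)$ reproduces the analogous collection for $G'$; the coincidence of monomials then yields a weight-preserving bijection $\mathcal{F}(G)\to\mathcal{F}(G')$ that matches individual forests by their weighted edge sets.

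\textbf{Conclusion and obstacle.} Applying Lemma \ref{spanningForestUnique} to this bijection gives $G\cong G'$, and since the bijection was constructed to preserve edge weights, the graph isomorphism upgrades to an equivalence of diffusion pairs $(G,\Delta)\cong(G',\Delta)$. The delicate point, and the place where I would be most careful, is the middle step: translating the monomial-by-monomial matching into an isomorphism of spanning forest sets \emph{in the precise sense} defined just before Lemma \ref{spanningForestUnique} — i.e.\ ensuring that each forest $F$ is graph-isomorphic to its image $f(F)$, and that these per-forest isomorphisms extend consistently to a single vertex-level identification. This is essentially the combinatorial content of the reconstruction and deserves explicit verification rather than an appeal to the symmetry of the situation; the weight bookkeeping ensured by the distinct-subset-sum property is exactly what makes this coherence argument go through.
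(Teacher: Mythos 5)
Your proposal follows essentially the same route as the paper: choose edge weights whose subset sums are pairwise distinct (the paper's assumption (\ref{assumption}), which your powers of two instantiate concretely), read off the spanning-forest set $\mathcal{F}(G)$ from the exponents appearing in the coefficients $a_i(Y)$ via Buslov's formula, and conclude with Lemma \ref{spanningForestUnique}. The step you flag as delicate --- upgrading the monomial-by-monomial matching to an isomorphism of forest sets in the precise sense required by that lemma --- is treated with comparable brevity in the paper's own proof, so there is no substantive divergence.
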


\begin{proof}
We need only prove that if $P(X,Y)=P'(X,Y)$, then $(G,\Delta)\cong (G',\Delta')$. 

\smallskip
Let
\[
\Delta\colon E(G)\to\mathset{\alpha_1,\dots,\alpha_{\absolute{E(G)}}
},
e\mapsto 
\alpha_e
\]
be a bijection. W.l.o.g. we may assume that the edges of $G$ are numbered as $\alpha_e$, and that $\Delta$ is the identity map, so that  we may
write
$Y^e$ instead of $Y^{\alpha_e}$ in our polynomials.
We further make the following assumption:

\begin{align}\label{assumption}
&\text{We assume that no edge label  equals the finite sum of}
\\\nonumber
&\text{ any other edge labels (they are all positive integers).}
\end{align}

\smallskip
Let $I\subset E(G)$. Then we define
\begin{align*}
\mathcal{F}^i_I(G)&=\mathset{F\in\mathcal{F}^i(G)\colon
E(F)= I}
\\
\mathcal{F}^i(I)&=\mathset{F\in\mathcal{F}^i(G)\colon E(F)\subseteq I}
\end{align*}
Then $\mathcal{F}^i(G)$ is the disjoint union of all the sets $\mathcal{F}^i_I(G)$ for $I\subset E(G)$. Also, $\mathcal{F}^i_I(G)$ is either empty or consists of precisely one forest.

\smallskip
Let $E(G)=:E$, and let
%$I=E\setminus J$, where $J$ is a set of $g$ edges such that
%removing the edges in $J$ from $G$ produces a (spanning) tree.
$H$ be a spanning subgraph of $G$.
Then 
\[
a_{H,i}(Y)=(-1)^{n-i}
\sum\limits_{F\in\mathcal{F}^i(H)}\prod\limits_{e\in E(F)}Y^e
\]
and for any spanning tree $T$ of $G$, we have
\begin{align}\label{polynomialDecomp}
a_i(Y)=a_{T,i}(Y)+ a_{G\setminus T,i}(Y)
\end{align}
where $G\setminus T$ is obtained from $G$ by removing the edges of $T$.

\smallskip
Now, given a spectral polynomial $P(X,Y)$, the polynomial
\[
a_1(Y)=\sum\limits_{k=1}^Ma_{1k}Y^k
\]
with $M>>0$ recovers the set of spanning trees as follows:
a monomial 
$a_{1k}Y^k$
means that there are $\absolute{a_{1k}}\in\mathds{N}$
spanning trees  whose total sum of edge labels equals $k$.
Because of our assumption (\ref{assumption}), we have
that 
\[
\absolute{a_{1k}}\le 1
\]
for all $k=1,2,\dots$. 
Hence, each non-zero coefficient of $a_1(Y)$ encodes precisely one spanning tree of $G$.
Also, the two parts of the decomposition (\ref{polynomialDecomp}) have no monomials in common.

\smallskip
In order to recover the diffusion constants, we look at
\[
a_{n-1}(Y)
\]
Again, assumption (\ref{assumption}) ensures that all coefficients are either $1$ or zero. So, the exponents of the non-zero monomials retrieve all the distinct labelled edges of $G$.

\smallskip
We can go now further to extract for every spanning tree with a given set of edge labels, the set of all spanning forests, thereby knowing their numbers of connected components.
Beginning with all pairs of distinct edges, we can also  extract the sets of edges in each connected component of any spanning forest. The assumption (\ref{assumption}) makes this possible. In this way, a unique spanning tree is constructed.
Doing this for all spanning trees,  we obtain  a unique set of 
spanning forests $\mathcal{F}(G)$ for some graph $G$.
By Lemma \ref{spanningForestUnique}, the isomorphism class of $G$ is now uniquely determined.
\end{proof}

\begin{Remark}
We remark that the spectral polynomial for any diffusion pair satisfying (\ref{assumption}) has coefficients $0,\pm 1$, as has been seen in the proof of the Reconstruction Theorem 
(Theorem \ref{reconstruct}).
\end{Remark}

\begin{cor}\label{game2graph}
Playing Game \ref{game}
with choosing
diffusion parameters which satisfy (\ref{assumption}),  
allows you to reconstruct the unknown graph $G$. 
\end{cor}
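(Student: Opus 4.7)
The plan is to chain together the three principal theorems of the paper so far: the winning strategy of Theorem \ref{winTheGame}, the interpolation-based recovery procedure of Theorem \ref{existenceRecoveryDatum}, and the Reconstruction Theorem \ref{reconstruct}. Since Game \ref{game} fixes one choice of $\Delta$ at the outset and then permits arbitrarily many primes $p$, the strategy should pre-commit to a $\Delta$ that is simultaneously admissible for all three steps.

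First, I would pick diffusion parameters $\Delta$ of pairwise distinct positive integers satisfying the arithmetic-independence condition (\ref{assumption}), namely that no edge label equals a finite sum of the others. This is the hypothesis required by Theorem \ref{reconstruct}, and since (\ref{assumption}) only restricts additive combinations of weights and places no upper bound on them, such a $\Delta$ always exists (for instance, a super-increasing integer sequence). Next, I would query the game for a sequence of increasing primes $p$; by Theorem \ref{winTheGame}, this constitutes a winning strategy that extracts, after finitely many queries, distinguished multi-sets $\Spec(L_r^\Delta)$ for enough values of $r\in\mathds{Z}$ (resp. $r\le 1$ in the non-uniform case).

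Third, I would feed these multi-sets into the terminating algorithm of Theorem \ref{existenceRecoveryDatum}: evaluating the characteristic polynomials $P(X,p^{1-r})=P_r(X)$ at $b_n=\frac{1}{2}n(n-1)$ points and interpolating jointly in $X$ and $Y$ produces $P_{G}^\Delta(X,Y)$. Finally, with this bivariate polynomial in hand, the Reconstruction Theorem \ref{reconstruct} applies verbatim because $\Delta$ was chosen to satisfy (\ref{assumption}), so the isomorphism class of the unknown graph $G$ is uniquely recovered. Stringing these four steps together is exactly what it means to ``reconstruct the unknown graph''.

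The step I expect to be the main obstacle, and the one most worth spelling out in the write-up, is verifying that a single choice of $\Delta$ can meet the hypotheses of both Theorem \ref{winTheGame} and Theorem \ref{reconstruct} at once. The perturbation-theoretic half of the proof of Theorem \ref{winTheGame} treats arbitrary integer weights, so compatibility is in principle automatic; but one must check that the cluster-separation bounds there do not degenerate when the weights are taken from a super-increasing (hence possibly very spread-out) sequence. Once this compatibility is observed, the corollary is essentially a formal concatenation of the three preceding results.
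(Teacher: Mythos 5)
Your proposal is correct and follows essentially the same route as the paper: the paper's proof is the one-line observation that Theorem \ref{winTheGame} provides a winning strategy for \emph{any} choice of diffusion parameters (so in particular for one satisfying (\ref{assumption})), after which Theorem \ref{reconstruct} pins down the isomorphism class of $G$. Your extra step through Theorem \ref{existenceRecoveryDatum} and your compatibility worry are already absorbed into the statement and proof of Theorem \ref{winTheGame}, so nothing further needs to be checked.
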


\begin{proof}
This is an immediate consequence of Theorem \ref{reconstruct},
because Game \ref{game} has a winning strategy for any choice of diffusion parameters according to Theorem \ref{winTheGame}.
\end{proof}

\begin{Remark}
Notice that we are not solving the graph isomorphism problem in polynomial time, as the computation of the coefficients of the spectral polynomial can be expected to be far too time-consuming.
\end{Remark}

%%%%%%%%%%%%%%%%%%%%%%%%%%%%%%
\section{Hearing Shapes of $p$-adic geometric objects}

Corollary \ref{game2graph} can also be applied 
to objects of $p$-adic geometry which have an underlying graph sructure. The first kind of objects consists of Mumford curves which have reduction graphs whose first Betti number equals the genus of the curve. The second kind are analytic tori which after a base change look like products of Mumford curves of genus $1$, so-called \emph{Tate curves}.
In order to be able to do this, we will construct embeddings of the sets of $K$-rational points of these objects into $K$.
The details of this procedure are presented in the  following two subsections.

%%%%%%%%%%%%%%%%%%%%%%%%%%
\subsection{Hearing the Shape of a Mumford Curve}

Mumford curves are explained in some detail in \cite[Ch.\ 5]{FP2004}. However, we will not need to much of their construction. All we need is that they are projective algebraic curves admitting a finite cover by holed disks.
Certain types of coverings by holed disks in $K$, called \emph{verticial coverings} are introduced in \cite{HeatMumf}. These produce certain types of reduction graphs, also called verticial.

\smallskip
Let $X$ be a Mumford curve. In \cite{HeatMumf}, the concept of verticial covering of the set $X(K)$ of its $K$-rational points was developped. Let such a covering be given, and let $G$ be the corresponding verticial reduction graph. It is a connected graph whose vertices all  have degree at least $2$, and its first Betti number equals the genus $g$ of the curve.
We require Mumford curves to have positive genus.

\smallskip
The verticial covering of $X(K)$ consists of holed disks which are in one-to-one correspondence with the vertices of $G$.
The edges of $G$ correspond to annuli (as rigid analytic spaces)
with minimal positive thickness, i.e.\ they do not contain $K$-rational points. These annuli connect two otherwise disjoint holed disks in $X(K)$ without introducing extra $K$-rational points.

\smallskip
As a $p$-adic manifold, $X(K)$ is simply a disjoint union of finitely many holed disks. This compact manifold can be embedded into $K$ as a closed-open subset in such a way that each patch of the embedded verticial covering of $X(K)$ contains a distinct point representing a class in $K/O_K$.
Now, we are in  the setting of Section \ref{sec:Laplacian}, 
and have a $p$-adic Laplacian operator acting on the space
$L^2(K)^{\absolute{G}}$.

\begin{cor}
Playing Game \ref{game} with diffusion parameters satisfying (\ref{assumption}) allows to reconstruct a verticial reduction graph of a Mumford curve.
\end{cor}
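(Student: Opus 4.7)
The plan is to reduce the statement directly to Corollary \ref{game2graph}. What needs to be done is to realise the verticial reduction graph $G$ of $X$ as a diffusion pair $(G,\Delta)$ in the sense of Section \ref{sec:Laplacian}, so that the $p$-adic Laplacian $\Lambda_G^\Delta$ is attached to $X(K)$ in an intrinsic way and the spectra received during Game \ref{game} are precisely those of $\Lambda_G^\Delta$.

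First, I would invoke the embedding of $X(K)$ into $K$ recalled in the preceding paragraph: the verticial covering of $X(K)$ is realised as a closed-open embedding in which each holed disk contains a distinct representative of a class in $K/O_K$. Selecting one such representative per holed disk yields a finite set $G\subset K/O_K$ whose cardinality equals the number of vertices of the verticial reduction graph. The combinatorial adjacency matrix $C$ that drives the operator $\Lambda_G^\Delta$ of Section \ref{sec:Laplacian} is then declared to be the adjacency matrix of the verticial reduction graph itself: two vertices $a,b\in G$ are adjacent exactly when the corresponding holed disks are linked by a minimal-thickness annulus. The verticial reduction graph is finite, connected, simple, and loop-free, so it is a legitimate combinatorial simple graph in the sense required by Theorem \ref{reconstruct}.

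Next, I would fix diffusion parameters $\Delta$ on the edges of $G$ satisfying assumption (\ref{assumption})---for instance, the distinct powers $1,2,4,8,\dots$ of $2$, for which no element equals a sum of the others, since sums of distinct powers of $2$ have unique non-trivial binary expansions. This produces a well-defined diffusion pair $(G,\Delta)$ together with the associated $p$-adic Laplacian $\Lambda_G^\Delta$ acting on $L^2(K)^{\absolute{G}}$. Corollary \ref{game2graph} then applies verbatim and asserts that Game \ref{game}, played with these diffusion parameters, has a winning strategy that reconstructs the combinatorial graph $G$ from the spectra of $\Lambda_G^\Delta$. Since by construction $G$ is precisely the verticial reduction graph of $X$, the reconstruction delivers the desired graph.

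The main potential obstacle is conceptual rather than technical: one must verify that Game \ref{game} is legitimate in this geometric setting, i.e.\ that the possibility to vary the prime $p$ required by the winning strategy of Theorem \ref{winTheGame} is compatible with the fact that the Mumford curve is defined over a fixed non-Archimedean local field. This is absorbed into the abstract formulation of the game, as the combinatorial datum $(G,\Delta)$ is treated as an input to the game independently of the geometric origin of $G$; the winning strategy is a statement about the combinatorial graph, and the only role of $K$ is to provide the Hilbert space on which $\Lambda_G^\Delta$ acts. Hence the strategy of Theorem \ref{winTheGame}, followed by the reconstruction procedure of Theorem \ref{reconstruct}, recovers the verticial reduction graph of $X$.
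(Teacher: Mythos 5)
Your proposal is correct and follows essentially the same route as the paper: the paper's own proof is a one-line appeal to "the construction above" (the embedding of the verticial covering of $X(K)$ into $K$ with one representative of $K/O_K$ per holed disk, placing one in the setting of Section \ref{sec:Laplacian}) together with Corollary \ref{game2graph}, which is exactly the reduction you carry out in more detail. Your added remarks---the explicit choice of powers of $2$ as labels satisfying (\ref{assumption}) and the observation that varying $p$ in Game \ref{game} concerns only the combinatorial datum---are harmless elaborations of the same argument.
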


\begin{proof}
This is immediate from the construction above and Corrolary \ref{game2graph}.
\end{proof}

%%%%%%%%%%%%%%%%%%%%%%%%%%%%%
\subsection{Hearing the Shape of a $p$-Adic Analytic Torus}

The theory of $p$-adic analytic tori is outlined in \cite[Ch.\ 6]{FP2004}. We will collect the data in what follows, and then proceed with the application.

\smallskip
Let $A$ be an analytic torus
\[
A=\mathds{G}_m/\Lambda
\]
with multiplicative lattice $\Lambda$ generated by a basis
\[
\tilde{q}_i=q_1^{\alpha_{i1}}\cdots q_g^{\alpha_{ig}}
\]
for $i=1\dots,g$ with
\[
q_i=q e_i
\]
where $q=p^f$, $e_i$ is the unit vector of the $i$-th component in the product
space $K^\times\times\cdots\times K^\times$, and $\alpha_{ij}\in\mathds{Z}$.

\smallskip
The lattice basis yields a decomposition
\[
K^g=\bigoplus\limits_{i=1}^g
K\tilde{q}_i
\]
On each component $K\tilde{q}_i$, we can define an ultrametric norm as follows:
the generator $\tilde{q}_i$ is determined by an integer vector 
\[
\alpha_i=(\alpha_{i1},\dots,\alpha_{ig})
\in\mathds{Z}^g
\]
Its associated \emph{primitive vector}
is a vector
\[
\lambda_i\in\mathds{N}^g
\]
such that
\[
\alpha_i=k\cdot\lambda_i
\]
with $k\in\mathds{Z}$, and $\absolute{k}$ is maximal with this property.
If we write
\[
\tilde{q}=q_1\cdots q_g
\]
and 
\[
\tilde{q}^\beta=
q_1^{\beta_1}\cdots q_g^{\beta_g}
\]
for 
\[
\beta=(\beta_1,\dots,\beta_g)\in\mathds{Z}^g
\]
Then we have
\[
\tilde{q}_i=\tilde{q}^{\alpha_i}
\]
Its associated \emph{primitive generator} of the line $K\tilde{q}_i$ is defined as
\[
\tilde{b}_i:=\tilde{q}^{\lambda_i}
\]
where $\lambda_i$ is the primitive vector associated with $\alpha_i$.

\begin{definition}
The ultrametric norm associated with the line $K\tilde{q}_i$ is defined as
\[
\norm{x}_{i}
=
\norm{\tilde{b}_i}_{K^g}^{-\log_q\absolute{\lambda}_K}
\]
where $x=\lambda \tilde{b}_i\in K\tilde{q}_i$ with 
primitive generator $\tilde{b}_i$, and $\lambda\in K$.
The Haar measure $\mu_i$ on 
$K\tilde{q}_i$ is normalised such that
the unit ball w.r.t.\ $\norm{\cdot}_i$ has measure one.
\end{definition}

The element $\tilde{q}_i$ defines a Tate curve, whowe reduction graph we assume to be simplicial, as follows: as we have
\[
\alpha_i=n_i\lambda_i
\]
with natural $n_i>2$,
it follows that
\[
\norm{\tilde{q}_i}_i=\norm{\tilde{b}_i}^{n_i}
=(q^{-c_i})^{n_i}
\]
with integer $c_i>0$.
Hence, the component Tate curve here is
\[
T_i=(K^\times \tilde{b}_i)/\langle\tilde{q}_i\rangle
\cong
K^\times/\left\langle (q^{c_i})^{n_i}\right\rangle
\]
However, there is a difference in the reduction graph structures on both sides of the isomorphism: a verticial covering of the curve on the right has
$c_in_i$ vertices, whereas anyone on the left has $n_i$ vertices.

\smallskip
We have a decomposition
\[
L^2(K^g)=\bigotimes\limits_{i=1}^g
L^2(K\tilde{q}_i)
\]
On each factor, we repeat the construction 
of the previous subsection and obtain an operator
\[
\Lambda\colon \bigotimes\limits_{i=1}^g L^2(K\tilde{q}_i)^{\absolute{G_i}}
\to
\bigotimes\limits_{i=1}^g L^2(K\tilde{q}_i)^{\absolute{G_i}}
\]
where $G_i$ is a verticial reduction graph of $T_i$. This operator generalises the product space operator from
\cite{BW2022} and decomposes as
\[
\Lambda_G=\Lambda_1+\dots+\Lambda_g
\]
where 
\[
\Lambda_i=1\otimes\cdots\otimes1\otimes\Lambda_{G_i}^{\Delta_i}\otimes 1\otimes\dots\otimes 1
\]
for $i=1,\dots,g$.

\smallskip
Instead of playing Game \ref{game} for each component graph $G_i$, we recover the product graph for the torus $A$:

\begin{cor}
Playing Game \ref{game} is possible for $p$-adic analytic tori in a successful way in order to recover the product graph composed of $G_1,\dots,G_g$.
\end{cor}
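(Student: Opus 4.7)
The plan is to reduce the torus case to several independent instances of Game \ref{game} played on the component Tate curves, and then invoke Corollary \ref{game2graph} on each component. The starting point is the decomposition
\[
\Lambda_G = \Lambda_1 + \cdots + \Lambda_g,\qquad
\Lambda_i = 1\otimes\cdots\otimes 1\otimes\Lambda_{G_i}^{\Delta_i}\otimes 1\otimes\cdots\otimes 1,
\]
recorded just before the statement. Since distinct factors act on distinct tensor slots, the operators $\Lambda_1,\dots,\Lambda_g$ pairwise commute, so $\Lambda_G$ is simultaneously diagonalisable with eigenvalues
\[
\mu_{j_1,\dots,j_g} = \lambda^{(1)}_{j_1} + \cdots + \lambda^{(g)}_{j_g},
\]
where $\lambda^{(i)}_{j_i}$ runs over $\Spec(\Lambda_{G_i}^{\Delta_i})$, and multiplicities multiply. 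Hence $\Spec(\Lambda_G)$ is the (multiset) Minkowski sum of the component spectra.

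Next, I would choose the diffusion parameters $\Delta_i$ on each $G_i$ so that, firstly, each $\Delta_i$ individually satisfies assumption (\ref{assumption}), and secondly, the labels across different components live in arithmetically separated ranges (for instance, so that all edge labels used on $G_i$ are strictly greater than the sum of all edge labels used on $G_1,\dots,G_{i-1}$). The global edge-label set then still satisfies (\ref{assumption}), so Theorem \ref{reconstruct} applies component-wise, and the block structure seen in Section \ref{sec:Laplacian} generalises to the tensor product: in the basis obtained by tensoring the bases $G_i\mathcal{K}$, the operator $\Lambda_G$ is block-diagonal, and each block is a Laplacian of the product weighted graph $G_1^{\Delta_1}_{r_1}\times\cdots\times G_g^{\Delta_g}_{r_g}$.

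Now the strategy for the game: play the version of Theorem \ref{winTheGame} but for the product, varying $p$. For $p$ sufficiently large the spectra of the individual $\Lambda_{G_i}^{\Delta_i}$ concentrate in disjoint geometric scales (controlled by the chosen $\alpha$-gaps), so the cluster analysis of Theorem \ref{winTheGame} identifies each summand $\lambda^{(i)}_{j_i}$ in a sum $\mu_{j_1,\dots,j_g}$ by the scale it contributes on. This separates $\Spec(\Lambda_G)$ into the Minkowski sum of the individual $\Spec(\Lambda_{G_i}^{\Delta_i})$, and a standard sumset-inversion argument (using the fact that $0$ lies in each $\Spec(L_r)$ at the trivial eigenvector, together with the smallest element in each scale) recovers each component spectrum as a multiset. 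Applying Corollary \ref{game2graph} to each component, we reconstruct every $G_i$, and hence the product graph composed of $G_1,\dots,G_g$.

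The main obstacle I expect is exactly the disentangling step: given only the multiset of sums $\mu_{j_1,\dots,j_g}$, recover the component spectra. The argument sketched above works because the arithmetic separation of the diffusion parameters, combined with the freedom to let $p\to\infty$, forces the contributions from different tensor slots onto different magnitudes of $p^{1-r}$, so that clustering splits a sum into its summands uniquely. Verifying that this separation can be arranged simultaneously with assumption (\ref{assumption}) (so that the reconstruction theorem still applies) and that the clustering remains unambiguous across all $r\in\mathds{Z}$ needed to interpolate the spectral polynomials, is the technical heart of the proof; everything else is a direct appeal to the results of Sections 3 and 4.
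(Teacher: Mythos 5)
Your proposal takes a genuinely different route from the paper, and the route you chose contains a gap at exactly the step you flag as its ``technical heart.'' The paper's proof is a one-liner in spirit: it does \emph{not} attempt to recover the component graphs $G_1,\dots,G_g$ separately. Instead it observes that $\Lambda_G=\Lambda_1+\dots+\Lambda_g$, being a sum of commuting operators each acting on one tensor slot, is (after suitable Hilbert-space isomorphisms identifying $\bigotimes_i L^2(K\tilde{q}_i)^{\absolute{G_i}}$ with $L^2(K)^{\absolute{G}}$) precisely a $p$-adic Laplacian attached to the product graph $G$ itself, viewed as a single diffusion pair. One then plays Game \ref{game} once, for the single graph $G$ with diffusion parameters satisfying (\ref{assumption}), and Corollary \ref{game2graph} recovers $G$ directly. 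No disentangling of the spectrum into component spectra is ever needed, because the object to be reconstructed is the product graph, not the tuple of factors.

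By contrast, you reduce to $g$ independent instances of the game and must therefore invert the Minkowski sum
$\Spec(\Lambda_G)=\bigl\{\lambda^{(1)}_{j_1}+\dots+\lambda^{(g)}_{j_g}\bigr\}$
to extract each $\Spec(\Lambda_{G_i}^{\Delta_i})$ as a multiset. This is where the argument does not go through as written. Multiset sumset inversion fails in general, and the scale-separation mechanism you invoke does not save it uniformly: for the reference block $r=1$ every edge weight $p^{\alpha_e(1-r)}$ equals $1$ regardless of the chosen $\alpha$-gaps, so at the one value of $r$ that Theorem \ref{winTheGame} uses as the anchor cluster there is no separation of scales between components at all; moreover each component spectrum contains $0$ and, as $r$ ranges over the infinitely many values needed to interpolate the spectral polynomials, the magnitude ranges of different components inevitably overlap. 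You would need a substantially more careful argument (or a restriction of the relevant $r$) to make the clustering unambiguous, and none is supplied. The paper's approach buys simplicity by sidestepping this problem entirely; the price is that it recovers only the product graph $G$ and not the individual factors $G_i$ --- but that is all the corollary claims. If you want to salvage your approach, the cleanest fix is to abandon the disentangling and instead verify (as the paper implicitly does) that the tensor-product block structure realises $\Lambda_G$ as the $p$-adic Laplacian of the product graph with an edge labelling that can be chosen to satisfy (\ref{assumption}), after which Corollary \ref{game2graph} applies verbatim.
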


\begin{proof}
Let $G$ be the product graph of $G_1,\dots,G_g$. Then 
$\Lambda_G$ can be viewed in fact as an operator
\[
\Lambda_G\colon L^2(K)^{\absolute{G}}\to
L^2(K)^{\absolute{G}}
\]
by taking suitable isomorphisms. Then play Game \ref{game} using assumption (\ref{assumption}). This recovers $G$.
\end{proof}

%%%%%%%%%%%%%%%%%%%%%%%%%%%%%%%%%%%%%%
\section*{Acknowledgements}
Evgeny Zelenov is thanked for posing this problem to one of the authors. 
David Weisbart is thanked for valuable discussions.
This research is partially supported by
the Deutsche Forschungsgemeinschaft under project number 469999674

%%%%%%%%%%%%%%%%%%
\bibliographystyle{plain}
\bibliography{biblio}

\end{document}